\documentclass[10pt,reqno]{amsart}
\usepackage{amsmath, hyperref}
\usepackage{amsthm}
\usepackage{amssymb}
\usepackage{amsfonts}
\usepackage{latexsym}
\usepackage{hyperref}
\usepackage{cite}
\usepackage{xcolor}

\usepackage{enumitem}
\usepackage{mathtools}
\mathtoolsset{showonlyrefs}

\usepackage{graphicx}
\graphicspath{{./GRAPHS/}}
\usepackage{subcaption}

\usepackage{tikz,ifthen}
\usetikzlibrary{intersections}
\usetikzlibrary{calc}
\usetikzlibrary{decorations.pathreplacing}
\usetikzlibrary{calc,decorations.markings}

\usetikzlibrary{positioning}
\usetikzlibrary{arrows,shapes,backgrounds,3d}
\usepackage{fix-cm}
\usetikzlibrary{decorations.pathreplacing,shapes,snakes}
\tikzstyle{box} = [rectangle, minimum width=1cm, minimum height=3cm,text centered, draw=black, fill=red!10]


\usepackage{fancyvrb}
\theoremstyle{plain}
\newtheorem{theorem}{Theorem}
\newtheorem{corollary}[theorem]{Corollary}
\newtheorem{lemma}[theorem]{Lemma}

\theoremstyle{definition}

\newtheorem{remark}[theorem]{Remark}

\renewcommand{\L}{\mathbb{L}}

\newcommand{\Q}{\mathbb{Q}}
\newcommand{\R}{\mathbb{R}}

\newcommand{\K}{\mathbb{K}}

\newcommand{\C}{\mathbb{C}}

\newcommand{\Gal}{\operatorname{Gal}}

\renewcommand{\Re}{\operatorname{Re}}
\renewcommand{\Im}{\operatorname{Im}}

\let\oldenumerate=\enumerate
	\def\enumerate{
	\oldenumerate
	\setlength{\itemsep}{5pt}
	}
\let\olditemize=\itemize
	\def\itemize{
	\olditemize
	\setlength{\itemsep}{5pt}
	}

\linespread{1.05}

\allowdisplaybreaks


\begin{document}
\VerbatimFootnotes
\title[Explicit estimates for Artin $L$-functions]{Explicit estimates for Artin $L$-functions: Duke's short-sum theorem and Dedekind Zeta Residues}

	\author[S.~R.~Garcia]{Stephan Ramon Garcia}
	\address{Department of Mathematics, Pomona College, 610 N. College Ave., Claremont, CA 91711} 
	\email{stephan.garcia@pomona.edu}
	\urladdr{\url{http://pages.pomona.edu/~sg064747}}
	
    \author[E.~S.~Lee]{Ethan Simpson Lee}
    \address{School of Science, UNSW Canberra at the Australian Defence Force Academy, Northcott Drive, Campbell, ACT 2612} 
    \email{ethan.s.lee@student.adfa.edu.au}
    \urladdr{\url{https://www.unsw.adfa.edu.au/our-people/mr-ethan-lee}}
	
\thanks{SRG supported by NSF Grant DMS-1800123.}

\subjclass[2010]{11R42, 11M06, 11S40}

\keywords{Number field, Artin $L$-function, Generalized Riemann Hypothesis, GRH, Dedekind zeta function, residue, prime, character}

\begin{abstract}
Under GRH, we establish a version of Duke's short-sum theorem for entire Artin $L$-functions.  This yields corresponding bounds for residues of Dedekind zeta functions.  All numerical constants in this work are explicit.
\end{abstract}

\maketitle
\section{Introduction}
In \cite{Duke}, Duke proved a remarkable ``short-sum theorem'' that relates the value of an Artin $L$-function at $s=1$ to a sum 
over an exceptionally small set of primes.  To be more specific, if $L(s,\chi)$ is an entire Artin $L$-function that satisfies the Generalized Riemann Hypothesis (GRH), Duke proved that
\begin{equation}\label{eqn:Duke_original}
\log L(1,\chi) \,\,=\!\!\!\! \sum_{p \leq (\log N)^{1/2} } \frac{\chi(p)}{p} + O(1),
\end{equation}
in which $p$ is a prime, $\chi$ has degree $d$ and conductor $N$, and the implicit constant depends only upon $d$ \cite[Prop.~5]{Duke}. 
Our main result is an explicit version of Duke's theorem
(by ``explicit'' we mean that there are no implied constants left unspecified).

\begin{theorem}\label{Theorem:Duke}
Let $L(s,\chi)$ be an entire Artin $L$-function that satisfies GRH, in which $\chi$ has degree $d$ and conductor $N$. Then
\begin{equation}\label{eq:Duke}
    \bigg| \log L(1,\chi)\,\, -\!\!\!\! \sum_{p \leq (\log N)^{1/2}} \frac{\chi(p)}{p}  \bigg|
    \,\leq\,  13.53d.
\end{equation}
\end{theorem}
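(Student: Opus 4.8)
The plan is to start from the Euler product. Write $\Lambda_\chi$ for the coefficients of $-\tfrac{L'}{L}(s,\chi)=\sum_{n\ge2}\Lambda_\chi(n)n^{-s}$; it is supported on prime powers, $|\Lambda_\chi(p^k)|\le d\log p$, and $\Lambda_\chi(p)=\chi(p)\log p$ in Duke's normalization. For $\Re s>1$ one has $\log L(s,\chi)=\sum_{n\ge2}\frac{\Lambda_\chi(n)}{n^s\log n}$, and separating the prime terms from the prime-power terms and letting $s\to1^+$ gives
\[
\log L(1,\chi)=\sum_p\frac{\chi(p)}{p}+\sum_p\sum_{k\ge2}\frac{\Lambda_\chi(p^k)}{k\,p^k\log p}.
\]
The double sum converges absolutely and is bounded by $d\sum_p\sum_{k\ge2}\tfrac1{kp^k}=(\gamma-M)d<0.316\,d$ (with $M$ Mertens' constant), the first sum converges conditionally (justified below using GRH), and holomorphy and non-vanishing of $L(s,\chi)$ on $\Re s\ge1$ — part of the hypotheses — let us pass to the limit. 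So it remains to bound the tail $\bigl|\sum_{p>y}\tfrac{\chi(p)}{p}\bigr|$ with $y=(\log N)^{1/2}$.

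The crucial observation is that the GRH bound for the prime sum is too lossy when $y$ is as small as $(\log N)^{1/2}$, so I would split the tail at an intermediate threshold $z=(\log N)^A$ for a fixed exponent $A>1/2$ to be fixed at the end. On the range $y<p\le z$ I use only $|\chi(p)|\le d$ together with an explicit form of Mertens' theorem:
\[
\Bigl|\sum_{y<p\le z}\frac{\chi(p)}{p}\Bigr|\le d\sum_{y<p\le z}\frac1p\le d\bigl(\log(2A)+\varepsilon(N)\bigr),
\]
where $\log\log z-\log\log y=\log(2A)$ and $\varepsilon(N)$ is an explicit, decreasing error term. On the range $p>z$ I use partial summation: with $S_\chi(t)=\sum_{p\le t}\chi(p)$ one has $\sum_{p>z}\frac{\chi(p)}{p}=\int_z^\infty\frac{S_\chi(t)-S_\chi(z)}{t^2}\,dt$, and under GRH an explicit bound of the shape $|S_\chi(t)|\le\sqrt{t}\,P(\log t,\log N,d)$ — derived from an explicit GRH estimate for $\psi_\chi(t)=\sum_{n\le t}\Lambda_\chi(n)$ by partial summation and the elementary inequalities $\psi(t)-\theta(t)\ll\sqrt t$ — makes this integral at most $C\,P(\log z,\log N,d)/\sqrt z$. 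Since $\sqrt z=(\log N)^{A/2}$ beats the (fixed-degree) polynomial $P$ once $A$ is chosen large enough, this contribution is $O\!\bigl(d(\log N)^{-\delta}\bigr)$; the same explicit $\psi_\chi$ bound also supplies the convergence of $\sum_p\chi(p)/p$ and the limit interchange used above.

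Combining the pieces gives $\bigl|\log L(1,\chi)-\sum_{p\le y}\frac{\chi(p)}{p}\bigr|\le 0.316\,d+d\log(2A)+E(N)$, where $E(N)$ collects the explicit lower-order terms; choosing $A$ as small as the GRH input permits and bounding the resulting constants yields $13.53\,d$. Two points need care. First, the threshold argument requires $(\log N)^{1/2}\ge2$, i.e. $N\gtrsim e^{4}$; the finitely many smaller conductors must be handled directly, e.g. by $\log L(1,\chi)=\log L(2,\chi)-\int_1^2\frac{L'}{L}(t,\chi)\,dt$ with an explicit GRH bound for $\frac{L'}{L}$ near $\Re s=1$, together with the trivial bound on the (boundedly many) terms of the short sum. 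Second — and this is the main obstacle — one needs a clean, fully explicit GRH bound for $\psi_\chi(t)$ for entire Artin $L$-functions with the correct explicit dependence on the degree $d$ and conductor $N$; granting that, the optimization over $z$ and the final constant-chasing are routine but lengthy.
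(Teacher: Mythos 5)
Your route is genuinely different from the paper's: you expand $\log L(1,\chi)$ exactly over primes and prime powers, truncate at $y=(\log N)^{1/2}$, use Mertens on a middle range $[y,z]$, and control the tail $p>z$ by partial summation against an explicit GRH bound for $S_\chi(t)=\sum_{p\le t}\chi(p)$ (equivalently $\psi_\chi$). The paper never uses any explicit prime-counting estimate for $\chi$: it smooths the sum with the weight $e^{-p/x}$ via the Cahen--Mellin integral, shifts the contour to $\Re s=\tfrac12+\delta$, bounds $L'/L$ there by a sharp real-part (Borel--Carath\'eodory type) lemma combined with a Phragm\'en--Lindel\"of convexity bound and a gamma-integral estimate (Lemmas \ref{Lemma:BoundL}--\ref{Lemma:LogChiEstimate}), and then integrates over $u\in[1,\tfrac32]$ and removes the smoothing (Lemmas \ref{Lemma:L1N}--\ref{Lemma:Nxy}). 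The gap in your proposal is precisely the ingredient you flag and then defer: a ``clean, fully explicit GRH bound for $\psi_\chi(t)$'' with explicit dependence on $d$ and $N$ is not an off-the-shelf citation for general entire Artin $L$-functions, and deriving one requires essentially the same apparatus (explicit convexity/gamma-factor estimates, zero counting or an explicit formula, all with numerical constants) that constitutes the technical core of the paper. The same unproved bound is also what you invoke to justify the conditional convergence of $\sum_p\chi(p)/p$ and the limit interchange at $s=1$, so the proposal as written is a plan contingent on its hardest step rather than a proof.

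Even granting such a $\psi_\chi$ bound, the numerical claim $13.53d$ is not actually derived, and the place where the constant comes from is hidden by the phrase ``routine but lengthy.'' The theorem must hold uniformly down to the minimal conductor $N=3$, where $\log N\approx1.1$: there $y<2$, the short sum is empty, your ``decreasing'' error terms $\varepsilon(N)$ and $E(N)$ are not small, and $\sqrt z=(\log N)^{A/2}$ gives no saving, so the asymptotic bookkeeping $0.316d+d\log(2A)+E(N)$ does not control the worst case. Your proposed patch for small conductors, $\log L(1,\chi)=\log L(2,\chi)-\int_1^2\frac{L'}{L}(t,\chi)\,dt$, itself needs an explicit GRH bound for $L'/L$ on $\Re s\in[1,2]$ --- again a lemma of the type you have not supplied --- and with realistic explicit constants (compare $C_2d\log(C_3N^{1/d}(|t|+4))$ in Lemma \ref{Lemma:LL384}) this crude route comfortably exceeds $13.53d$. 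The paper's constant arises from evaluating all error terms at $N=3$ after a three-parameter optimization; without carrying out the analogous uniform-in-$N$ constant chase, your argument establishes at best a bound of the form $C(A)\,d$ for large $N$ with an unspecified treatment of small $N$, not the stated inequality. (Minor points that do check out: $\sum_p\sum_{k\ge2}\frac1{kp^k}=\gamma-M<0.316$, and the middle-range Mertens term $\log(2A)$ is computed correctly.)
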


The need to make every step explicit requires great precision. A wide variety of preliminary results are needed with
concrete numerical constants.  For example, we require certain gamma-function integral estimates and a version of Mertens second theorem applicable over a large range
and with all numerical constants specified.  Furthermore, several parameters must be finely tuned and optimized to reduce the upper bound in \eqref{eq:Duke}
as far as our techniques permit (all numerical computations that follow were verified independently in both Mathematica and Python).
Overall, the proof of Theorem \ref{Theorem:Duke} is much more complex than the proof of \eqref{eqn:Duke_original}.

Duke's original motivation was the construction (under GRH) of totally real number fields with certain extremal properties \cite{Duke}; see \cite{MR3179652,MR3121669} for related work also assuming the Artin or strong Artin conjectures, and
\cite{MR2276192} for progress in the unconditional case. 
Since its introduction in 2003, Duke's result has been used to study the smallest point on a diagonal cubic surface \cite[p.~192]{MR2676747}, complex moments of symmetric power $L$-functions \cite[(1.46)]{MR2035301}, upper bounds on the class number of a CM number field with a given maximal real subfield \cite[p.~938]{MR2600412},
and extreme logarithmic derivatives of Artin $L$-functions \cite[p.~583]{MR3049696}
(all under GRH). Consequently, the novel bound \eqref{eq:Duke} should lead to explicit estimates in several adjacent areas.

Theorem \ref{Theorem:Duke} yields explicit estimates for $\kappa_{\K}$,
the residue at $s=1$ of the Dedekind zeta function $\zeta_{\K}(s)$ of a number field $\K$
(the Riemann zeta function is $\zeta = \zeta_{\Q}$).
For quadratic fields, such estimates have a long history dating back to Littlewood \cite{Littlewood};
see also Chowla \cite{Chowla},
Chowla--Erd\H{o}s \cite{MR44566},
Elliott \cite{MR249374},
Granville--Soundararajan \cite{MR2024414},
and Montgomery--Vaughan \cite{MR1689558}.
Since $\kappa_{\K}$ appears in the analytic class number formula \cite[p.~161]{Lang}
and in Mertens-type theorems for number fields \cite{Rosen,Lebacque},
explicit estimates such as the following should have immediate use.

\begin{corollary}\label{Corollary:Kappa}
Assume GRH and that $\zeta_{\K}/\zeta$ is entire. 
If $\K$ is a number field of degree $n_{\K} \geq 2$ and discriminant $\Delta_{\K}$, then
\begin{equation}\label{eq:Kappa}
\frac{1}{e^{17.81 (n_{\K}-1)} \log \log | \Delta_{\K}| }
\leq \kappa_{\K} \leq  (e^{17.81}\log \log | \Delta_{\K}|)^{n_{\K}-1} .
\end{equation}
For $n_{\K}=2,3,4,5,6$, we have the following.
\begin{itemize}
\item For $n_{\K}=2$, we have
$\displaystyle\frac{1}{e^{17.81} \log \log | \Delta_{\K}|} \leq \kappa_{\K} \leq e^{17.81} \log \log | \Delta_{\K}|$.

\item For $n_{\K}=3$, we have
$\displaystyle\frac{1}{e^{17.54} \log \log | \Delta_{\K}|} \leq \kappa_{\K} \leq e^{18.87} (\log \log | \Delta_{\K}|)^2$.

\item For $n_{\K}=4$, we have
$\displaystyle\frac{1}{e^{22.52} \log \log | \Delta_{\K}|} \leq \kappa_{\K} \leq e^{24.1} (\log \log | \Delta_{\K}|)^3$.

\item For $n_{\K}=5$, we have
$\displaystyle\frac{1}{e^{26.93} \log \log | \Delta_{\K}|} \leq \kappa_{\K} \leq e^{28.2} (\log \log | \Delta_{\K}|)^4$.

\item For $n_{\K}=6$, we have
$\displaystyle\frac{1}{e^{32.24} \log \log | \Delta_{\K}|} \leq \kappa_{\K} \leq e^{33.36} (\log \log | \Delta_{\K}|)^5$.
\end{itemize}
\end{corollary}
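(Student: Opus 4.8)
The plan is to derive \eqref{eq:Kappa} from Theorem~\ref{Theorem:Duke} applied to $L(s,\chi):=\zeta_{\K}(s)/\zeta(s)$, which is entire by hypothesis and is the Artin $L$-function attached to $\operatorname{Ind}_{G_{\K}}^{G_{\Q}}\mathbf{1}\ominus\mathbf{1}$. Thus $\chi$ has degree $d=n_{\K}-1$, and by the conductor--discriminant formula its conductor is $N=|\Delta_{\K}|$; since $\zeta$ has a simple pole at $s=1$ of residue $1$, one gets $\kappa_{\K}=L(1,\chi)$, so everything reduces to estimating $\log L(1,\chi)$. Two elementary facts will carry the argument. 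First, for every prime $p$ the coefficient $\chi(p)$ in Duke's short sum satisfies $-1\le\chi(p)\le d$: the upper bound is $|\chi(p)|\le\chi(1)=d$ for unramified $p$ (and no larger at ramified $p$), while the lower bound records that $\operatorname{Ind}_{G_{\K}}^{G_{\Q}}\mathbf{1}$ has non-negative trace at Frobenius — it counts the primes of $\K$ above $p$ of residue degree one — so subtracting $\mathbf{1}$ costs at most $1$. Second, $|\Delta_{\K}|\ge 3$ for $n_{\K}\ge 2$ by Minkowski's bound, so $\log\log|\Delta_{\K}|>0$ and, crucially, $\log\log\log|\Delta_{\K}|\le\log\log|\Delta_{\K}|$.

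First I would invoke Theorem~\ref{Theorem:Duke} to obtain $\bigl|\log\kappa_{\K}-S\bigr|\le 13.53\,d$ with $S=\sum_{p\le(\log N)^{1/2}}\chi(p)/p$, and then bound $S$ in both directions: $S\le d\sum_{p\le(\log N)^{1/2}}1/p$ for the upper estimate, but only the sharper $S\ge-\sum_{p\le(\log N)^{1/2}}1/p$ for the lower one, using $\chi(p)\ge-1$. Next I would push the prime sum $\sum_{p\le(\log N)^{1/2}}1/p$ through the paper's explicit Mertens-type bound: for $(\log N)^{1/2}\ge 2$ this gives $\log\log\bigl((\log N)^{1/2}\bigr)+M+(\text{small error})=\log\log\log N-\log 2+O(1)$, while for $3\le N<e^{4}$ the sum is empty. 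In both regimes I then replace $\log\log\log N$ by $\log\log N$; this is precisely the move that converts the triple logarithm forced by Duke's cutoff $(\log N)^{1/2}$ into the single $\log\log|\Delta_{\K}|$ appearing in \eqref{eq:Kappa}.

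Exponentiating yields $\kappa_{\K}\le\exp\bigl(d(\log\log|\Delta_{\K}|+c_{0})\bigr)=(e^{c_{0}}\log\log|\Delta_{\K}|)^{n_{\K}-1}$ and $\kappa_{\K}\ge\exp\bigl(-13.53\,d-c_{1}-\log\log|\Delta_{\K}|\bigr)\ge\bigl(e^{c_{2}(n_{\K}-1)}\log\log|\Delta_{\K}|\bigr)^{-1}$ for explicit constants $c_{0},c_{1},c_{2}$; note that the factor $d$ multiplies $\sum 1/p$ only in the upper estimate, which is why the lower bound keeps a single power of $\log\log|\Delta_{\K}|$ while the upper bound acquires the $(n_{\K}-1)$-st power. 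It then remains to pick one constant that works uniformly over all admissible $\K$. The binding constraint is the small-discriminant regime (e.g.\ $\K=\Q(\sqrt{-3})$), where $(\log N)^{1/2}$ is barely larger than $1$, so the Mertens bound $\sum_{p\le x}1/p\le\log\log x+C$ must absorb a large negative $\log\log x$; optimizing there, together with the slack from $\log\log\log N\le\log\log N$ and the explicit Mertens constant, delivers the value $17.81$. The tabulated bounds for $n_{\K}=2,\dots,6$ come from rerunning the same optimization with $d=n_{\K}-1$ held fixed, which tightens the numerical constants (and for $n_{\K}=2$ simply reproduces the general bound).

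The hard part is the asymmetry just flagged. Were one to use only $|\chi(p)|\le d$ on both sides, the lower bound would emerge in the shape $\kappa_{\K}\ge\bigl(e^{O(n_{\K})}(\log\log|\Delta_{\K}|)^{n_{\K}-1}\bigr)^{-1}$, which is strictly weaker than \eqref{eq:Kappa} for $|\Delta_{\K}|$ in a moderate range; so identifying and exploiting $\chi(p)\ge-1$ — which holds precisely because $\chi$ is built from $\operatorname{Ind}_{G_{\K}}^{G_{\Q}}\mathbf{1}\ominus\mathbf{1}$ — is indispensable rather than cosmetic. The remaining work is bookkeeping: matching constants across the transition between the empty-sum range and the range on which the explicit Mertens estimate is valid, and checking the finitely many smallest-discriminant fields directly so that the single constant $17.81$ (respectively the sharper tabulated ones) is legitimate throughout.
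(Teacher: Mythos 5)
Your overall strategy is the paper's own: realize $\kappa_{\K}=L(1,\chi)$ with $\chi$ of degree $d=n_{\K}-1$ and conductor $N=|\Delta_{\K}|$, apply the short-sum theorem, exploit the asymmetry $-1\le\chi(p)\le d$ (so the lower bound carries only one factor $\log\log|\Delta_{\K}|$), and feed $\sum_{p\le(\log N)^{1/2}}1/p$ into the explicit Mertens bound of Lemma \ref{Lemma:Mertens}. However, your logarithm bookkeeping is wrong as written. Mertens gives $\sum_{p\le(\log N)^{1/2}}1/p\le\log\log\log N+\log\tfrac12+M+m(\sqrt{\log N})$, and the passage from the triple logarithm to the $\log\log|\Delta_{\K}|$ in \eqref{eq:Kappa} happens by \emph{exponentiation}: $\exp(d\log\log\log|\Delta_{\K}|)=(\log\log|\Delta_{\K}|)^{d}$ and $\exp(-\log\log\log|\Delta_{\K}|)=(\log\log|\Delta_{\K}|)^{-1}$. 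There is no step in which $\log\log\log N$ should be replaced by $\log\log N$; if you make that replacement before exponentiating (your ``crucial'' inequality $\log\log\log|\Delta_{\K}|\le\log\log|\Delta_{\K}|$), you end up proving only $\kappa_{\K}\le e^{c_{0}d}(\log|\Delta_{\K}|)^{d}$, which is weaker than \eqref{eq:Kappa}, and your displayed identity $\exp\bigl(d(\log\log|\Delta_{\K}|+c_{0})\bigr)=(e^{c_{0}}\log\log|\Delta_{\K}|)^{n_{\K}-1}$ is false, since $\exp(d\log\log|\Delta_{\K}|)=(\log|\Delta_{\K}|)^{d}$. The same dropped logarithm occurs in your lower bound. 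The repair is trivial (keep the triple log and exponentiate), and with it your arithmetic for the general constant is right: $13.53+\bigl(\log\tfrac12+M+m(\sqrt{\log 3})\bigr)\approx 17.81$, which is how the value arises in the paper as well (the paper carries the parameterized bound $G(\beta,\delta,\eta;d,N)$ from \eqref{eq:Gbound} rather than the collapsed constant $13.53$, and re-optimizes, landing on the same triple \eqref{eq:Choice}).

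The second gap concerns the tabulated constants for $3\le n_{\K}\le 6$: they cannot be obtained by invoking Theorem \ref{Theorem:Duke} with the fixed constant $13.53d$ and ``rerunning the optimization with $d$ held fixed.'' For $n_{\K}=3$, for instance, that route gives at best roughly $2\cdot13.53+2\bigl(\log\tfrac12+M+m(\sqrt{\log 23})\bigr)\approx 29.7$, far above the claimed $18.87$. The paper's improvements require two additional inputs: the minimal absolute discriminants $N_{0}(n)=23,\,117,\,1609,\,9747$ for degrees $3,\dots,6$ (so one may take $N\ge N_{0}$, shrinking both $m(\sqrt{\log N})$ and the main term), and a fresh optimization of $(\beta,\delta,\eta)$ in the parameterized bound $G(\beta,\delta,\eta;d,N_{0})$ for each pair $(d,N_{0})$, as recorded in Table \ref{Table:HK}; the affine structure $G=A+Bd$ matters here, since minimizing $A+Bd$ at the larger $N_{0}$ is much better than using $(A+B)d$ at $N=3$. ``Checking the finitely many smallest-discriminant fields directly'' is not the mechanism and would not suffice: what is needed is the discriminant lower bound valid for \emph{all} fields of the given degree, not a verification at particular fields.
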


That $\zeta_{\K}/\zeta$ is entire is known if 
$\K$ is normal or if the Galois group of its normal closure is solvable \cite{Uchida,vdW}; see \cite[Ch.~2]{MMRMVK} for more information.
In particular, this hypothesis holds for any cubic or quartic number field. 
The upper and lower bounds in \eqref{eq:Kappa} have the expected order of magnitude (under GRH) with respect to $|\Delta_{\K}|$, although the dependence on $n_{\K}$ can be improved if one uses inexplicit constants; see \cite[(1.1)]{ChoKim}.
The novel contribution in Corollary \ref{Corollary:Kappa} is the explicit dependence upon the degree and discriminant;
there are no implied constants left unspecified.

For $n_{\K}\geq 3$, the best known explicit unconditional bounds are
\begin{equation}\label{eq:UncondKappa}
    \frac{0.0014480}{n_{\K} g(n_{\K}){|\Delta_{\K}|}^{1/n_{\K}}}
    \,\,<\,\, \kappa_{\K} \,\,\leq \,\,
    \left(\frac{e\log |\Delta_{\K}| }{2(n_{\K} - 1)}\right)^{n_{\K} - 1},
\end{equation}
in which $g(n_{\K})=1$ if $\K$ has a normal tower over $\Q$ and $g(n_{\K}) = n_{\K}!$ otherwise. 
The lower bound follows from an analysis of Stark's paper \cite{StarkBS}, although his language is ambiguous;
see \cite[Rem.~13]{GarciaLeeMertens}. 
The upper bound is due to Louboutin \cite[Thm.~1]{Louboutin00}.
For entire $\zeta_{\K}/\zeta$, Louboutin refined the upper bound in \eqref{eq:UncondKappa} 
\cite[Thm.~1]{Louboutin11} and provided refinements under other
assumptions (e.g., $\K$ is totally imaginary) and depending upon the location of a possible real zero of $\zeta_{\K}$  \cite{Louboutin03, Louboutin05};  see also Ramar\'{e} \cite[Cor.~1]{RamL(1chi)_2}.

This paper is organized as follows.
The proof of Theorem \ref{Theorem:Duke}, which occupies the bulk of this paper, is in Section \ref{Section:ProofDuke}.
The proof of Corollary \ref{Corollary:Kappa} is in Section \ref{Section:ProofKappa}.

\subsection*{Acknowledgments}
Special thanks to the anonymous referee for a careful reading of this paper.
Thanks to Andrew Booker, Peter Cho, Michaela Cully-Hugill, Bill Duke, Eduardo Friedman, Edray Goins, Ken Ribet, 
Aleksander Simoni\v{c}, Valeriia Starichkova, and Tim Trudgian for helpful conversations.  


\section{Proof of Theorem \ref{Theorem:Duke}}\label{Section:ProofDuke}

We assume the Generalized Riemann Hypothesis (GRH) throughout what follows.
The proof of Theorem \ref{Theorem:Duke} require a series of lemmas, which are spread out over several subsections.
Section \ref{Subsection:Mertens} presents Mertens' second theorem with an explicit error term and a large range of applicability,
Section \ref{Subsection:DukeEstimate} involves general bounds for entire Artin $L$-functions, and
Section \ref{Subsection:DukeLogDerivative} deduces estimates for their logarithmic derivatives.
A technical integral estimate involving the gamma function appears in
Section \ref{Subsection:Gamma}.
In Section \ref{Subsection:Approx}, we approximate the logarithmic derivative of an entire Artin $L$-function using a sum over primes.
Section \ref{Subsection:Weighted} builds upon the previous material and considers a particular exponentially weighted sum, which is refined further in
Section \ref{Subsection:Exponential}. The proof of Theorem \ref{Theorem:Duke} wraps up in Section \ref{Subsection:DukeComplete}, 
where we optimize several numerical parameters.  Appendix \ref{Section:Constants} contains a convenient, cross-referenced summary of the constants
and functions that arise throughout the proof.

\subsection{Explicit Mertens' theorems}\label{Subsection:Mertens}

We require an explicit version of Mertens' second theorem
under the Riemann Hypothesis (which is implied by GRH).
We need a convenient estimate valid for $x \geq (\log 3)^{1/2} \approx 1.04815$
(this number arises because $3$ is the smallest possible value of the conductor of an entire Artin $L$-function;
see the comments at the beginning of Subsection \ref{Subsection:DukeEstimate}).
Thus, we add a rapidly decaying term to Schoenfeld's estimate
\cite[Cor.~2]{Schoenfeld} so that the final result is valid in a larger range while keeping
the main term essentially intact for large $x$.

\begin{lemma}\label{Lemma:Mertens}
Assuming the Riemann Hypothesis,
\begin{equation}\label{eq:SuperRosser}\qquad
    \bigg|\sum_{p\leq x}\frac{1}{p} - \log\log{x} - M\bigg| < m(x) \quad\text{for}\quad x\geq 1.048,
\end{equation}
in which $M = 0.261497212847642783755\dots$\label{p:M} is the Meissel--Mertens constant and
\begin{equation}\label{eq:mFunction}
    m(x) :=
    \dfrac{1}{\sqrt{x}} \left(\dfrac{3\log{x} + 4}{8\pi}\right)+ \frac{5}{x^2}.
\end{equation}
\end{lemma}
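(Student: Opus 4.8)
The plan is to start from Schoenfeld's conditional estimate \cite[Cor.~2]{Schoenfeld}, which under the Riemann Hypothesis gives
\begin{equation*}
\biggl| \sum_{p \leq x} \frac{1}{p} - \log\log x - M \biggr| < \frac{3\log x + 4}{8\pi\sqrt{x}}
\end{equation*}
valid for all sufficiently large $x$ (Schoenfeld states it for $x \geq 8.4$ or a similar threshold). Since the target inequality \eqref{eq:SuperRosser} adds the nonnegative term $5/x^2$ to Schoenfeld's bound, the inequality holds automatically on whatever range Schoenfeld's estimate is valid. So the entire content of the lemma is to push the range down from Schoenfeld's threshold to $x \geq 1.048$, and this is exactly what the extra term $5/x^2$ is engineered to absorb.

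First I would fix the constant $M = 0.2614972128\ldots$ and record the exact value of $\sum_{p \leq x} 1/p$ on the relevant initial range: for $x < 2$ the sum is empty (equal to $0$); for $2 \leq x < 3$ it equals $\tfrac12$; for $3 \leq x < 5$ it equals $\tfrac12 + \tfrac13 = \tfrac56$; for $5 \leq x < 7$ it equals $\tfrac56 + \tfrac15 = \tfrac{31}{30}$; and so on. On each such interval $[p_k, p_{k+1})$ the quantity $\sum_{p\leq x} 1/p - \log\log x - M$ is a continuous, strictly decreasing function of $x$ (since $\log\log x$ is increasing), so on that interval it attains its extreme values at the endpoints. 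Hence on each interval I only need to check the inequality $|{\cdot}| < m(x)$ at finitely many points, being careful with the discontinuities: at a prime $p$ the partial sum jumps up by $1/p$, so the relevant bounds are the right-hand limit at $p$ and the left-hand limit at the next prime. Meanwhile $m(x) = \tfrac{3\log x + 4}{8\pi\sqrt{x}} + \tfrac{5}{x^2}$ is itself continuous and (one should check, but it is clear from the shape) eventually decreasing, so a crude monotone lower bound for $m$ on each checking interval suffices. Combining a lower bound for $m(x)$ with the exact endpoint values of the error gives the inequality on $[1.048, X_0]$ for a moderate cutoff $X_0$ chosen to exceed Schoenfeld's threshold.

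The only real work is the bookkeeping: one must verify $|\sum_{p\le x}1/p - \log\log x - M| < m(x)$ at the left endpoint $x = 1.048$ (where the left side is $|0 - \log\log 1.048 - M| = |{-\log(0.04688\ldots)} - M|$, a concrete number near $3.06 - 0.26 \approx 2.80$, versus $m(1.048) \approx \tfrac{4.09}{8\pi\sqrt{1.048}} + \tfrac{5}{1.098} \approx 0.159 + 4.55 \approx 4.71$, comfortably true), and then at each prime boundary and each left-limit up to $X_0$. I expect the tightest cases to be near the smaller primes — in particular just below the first few primes, where $\log\log x$ has grown but the next $1/p$ has not yet been added, and just above $x=1.048$ where $|\log\log x|$ is largest. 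The main obstacle, therefore, is not conceptual but the need to bound $m$ from below carefully on each subinterval and to handle the one-sided limits at the jump points correctly; once $X_0$ is reached, Schoenfeld's result (together with the nonnegativity of $5/x^2$) closes out the range $x \geq X_0$ with nothing further to prove.
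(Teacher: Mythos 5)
Your proposal is correct and takes essentially the same route as the paper: the paper also invokes Schoenfeld's conditional estimate \cite[Cor.~2, eq.~(6.21)]{Schoenfeld} (whose actual threshold is $x \geq 13.5$, on which range the added $5/x^2$ is not needed) and settles the remaining range $1.048 \leq x < 13.5$ by direct computation, which your interval-by-interval endpoint check with one-sided limits at the primes makes rigorous. The only caution is that the clearance just below $x=7$ is roughly $0.001$, so that finite verification must be carried out with adequate numerical precision.
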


\begin{proof}
For $x \geq 13.5$, the result follows from \cite[Cor.~2, eq.~(6.21)]{Schoenfeld}
(the summand $5/x^2$ is not needed in this range).
For $1.048 \leq x < 13.5$, the desired inequality 
can be verified by direct computation; see Figure \ref{Figure:Schoenfeld}.
\end{proof}

\begin{figure}
\centering
\includegraphics[width=0.6\textwidth]{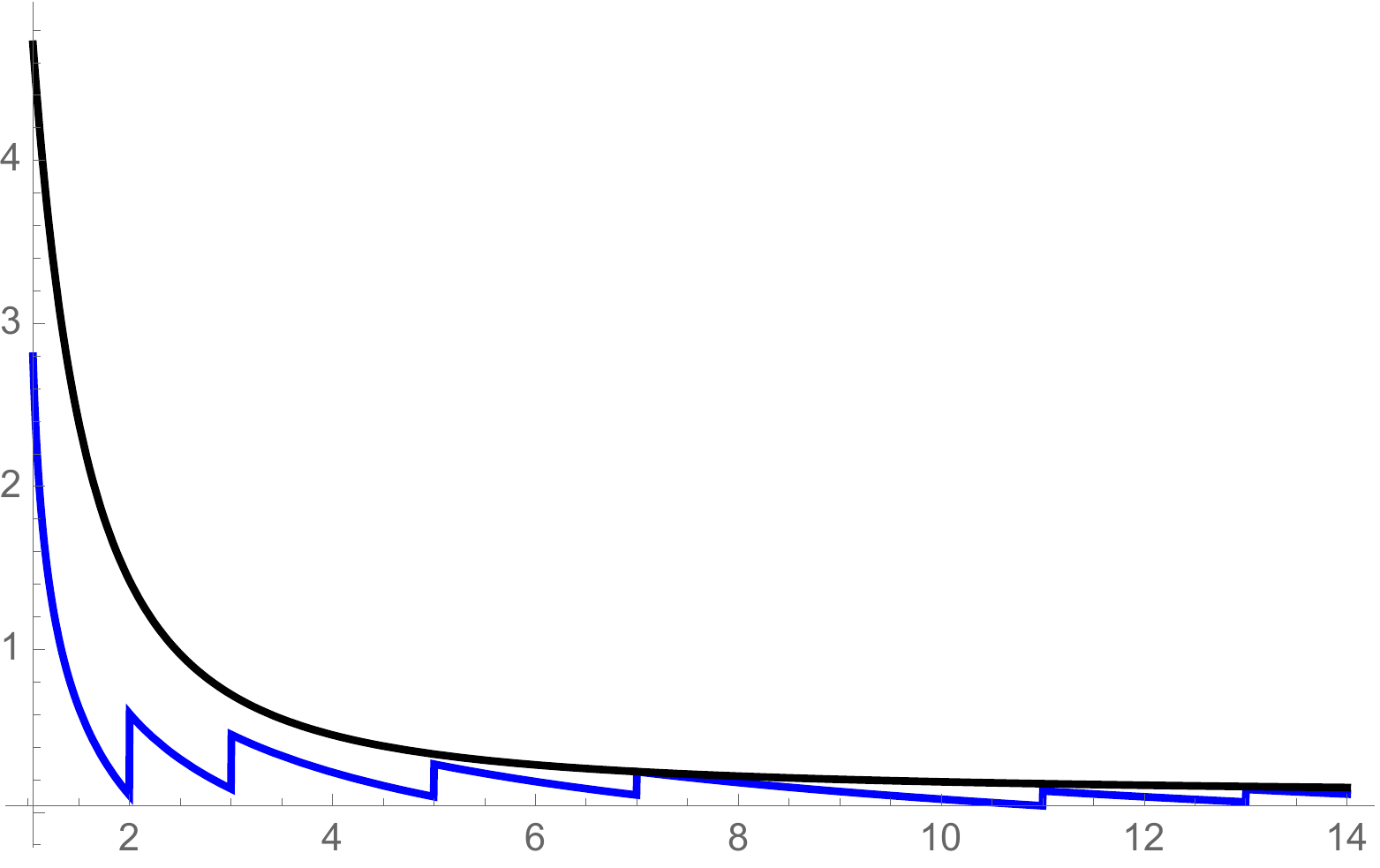}
\caption{Lemma \ref{Lemma:Mertens} in the range not covered by 
\cite[Cor.~2]{Schoenfeld}.  Graph of
$|\sum_{p\leq x} \frac{1}{p} - \log\log x - M|$ (blue) versus
$m(x)$ (black) for $1.048 \leq x \leq 13.5$. The vertical clearances at $x=5,7,11,13$ are approximately
$0.061$, $0.0010$, $0.045$, and $0.018$, respectively.}
\label{Figure:Schoenfeld}
\end{figure}

\subsection{Explicit bounds for Artin $L$-functions}\label{Subsection:DukeEstimate}
The relevant background for what follows can be found in \cite[Ch.~5]{iwaniec2004analytic},
\cite{MR1846449}, or \cite[Chap.~VII]{Neukirch}.
Let $L(s,\chi)$ be an entire Artin $L$-function, in which $\chi$ has degree $d$ and conductor $N$;
it is of  finite order with a canonical product of genus $1$ \cite[Thm.~5.6]{iwaniec2004analytic}.
We assume that $L(s,\chi)$ satisfies GRH, so that  $\log L(s,\chi)$ is analytic on $\Re s > 1/2$.
Moreover, $N \geq 3$ since it is integral and Pizarro-Madariaga
proved that $N \geq (2.91)^d$ \cite[Thm.~3.2]{MR2728993};
Odlyzko's lower bound $(2.38)^d$ \cite[p.~482]{MR0453701} also implies
the desired estimate.

\begin{lemma}\label{Lemma:ZetaDelta}
$| L(s,\chi) | \leq \zeta(\delta)^d$ for $\Re s \geq \delta > 1$.
\end{lemma}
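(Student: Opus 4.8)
The plan is to use the Euler product of $L(s,\chi)$, which converges absolutely on $\Re s > 1$ and hence on the closed half-plane $\Re s \geq \delta$. Writing $L(s,\chi) = \prod_p L_p(s,\chi)$, the local factor at a prime $p$ has the form $L_p(s,\chi) = \prod_{j=1}^{d_p}(1 - \alpha_{j,p}\,p^{-s})^{-1}$, where $d_p \leq d$ (with $d_p = d$ at primes unramified in the fixed field of $\ker\chi$) and each inverse root satisfies $|\alpha_{j,p}| \leq 1$: the $\alpha_{j,p}$ are eigenvalues of a Frobenius element acting on the inertia invariants of the representation attached to $\chi$, so they are roots of unity at unramified primes and have modulus at most $1$ in general. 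This structural fact is standard and I would simply quote it from \cite[Ch.~5]{iwaniec2004analytic}, \cite{MR1846449}, or \cite[Chap.~VII]{Neukirch}.

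First I would bound a single Euler factor. For $\Re s \geq \delta > 1$ and any $\alpha$ with $|\alpha| \leq 1$, the reverse triangle inequality gives $|1 - \alpha p^{-s}| \geq 1 - |\alpha|\,p^{-\Re s} \geq 1 - p^{-\delta} > 0$, hence $|(1 - \alpha p^{-s})^{-1}| \leq (1 - p^{-\delta})^{-1}$. Multiplying the $d_p \leq d$ factors of $L_p(s,\chi)$ and using $(1 - p^{-\delta})^{-1} \geq 1$ to pad out to exactly $d$ factors yields $|L_p(s,\chi)| \leq (1 - p^{-\delta})^{-d}$. Then, taking the product over all primes and invoking absolute convergence to justify the termwise estimate,
\[
|L(s,\chi)| \;\leq\; \prod_p (1 - p^{-\delta})^{-d} \;=\; \Bigl(\prod_p (1 - p^{-\delta})^{-1}\Bigr)^{d} \;=\; \zeta(\delta)^d,
\]
where the last equality is the Euler product for the Riemann zeta function, valid since $\delta > 1$.

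There is no genuine obstacle here: the proof is a direct consequence of the standard automorphic-type Euler product for Artin $L$-functions. The only point requiring care is the input that the inverse roots of the local factors have absolute value at most $1$, which is precisely what the cited references supply; once that is in hand, the condition $\delta > 1$ guarantees both the positivity of each $1 - p^{-\delta}$ and the absolute convergence that legitimizes the termwise bounds and rearrangement above.
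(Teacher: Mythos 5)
Your proof is correct, but it takes a different route from the paper. The paper works with the logarithm rather than the Euler product: it quotes the expansion $\log L(s,\chi)=\sum_p\sum_{m\ge 1}\frac{1}{m}\chi(p^m)p^{-ms}$ together with the bound $|\chi(p^m)|\le d$ (both from Duke), deduces $|\log L(s,\chi)|\le d\sum_p\sum_m \frac{p^{-m\delta}}{m}=d\log\zeta(\delta)$, and concludes via $|L|=|e^{\log L}|\le e^{|\log L|}\le\zeta(\delta)^d$. You instead bound each local Euler factor by $(1-p^{-\delta})^{-d}$ using the fact that the inverse roots on the inertia invariants have modulus at most $1$ and that there are at most $d$ of them, then multiply. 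The two arguments rest on the same structural input --- $|\chi(p^m)|\le d$ is exactly the statement that $\chi(p^m)$ is a sum of at most $d$ complex numbers of modulus at most $1$ --- so neither is more general, and both give the identical bound. The paper's choice is mildly preferable in context only because the log-series \eqref{eq:ArtinLog} is reused later (its derivative drives Lemma \ref{Lemma:LogChiEstimate}), whereas your Euler-product bookkeeping would be introduced solely for this lemma. One small point of care in your write-up that you handled correctly: padding the $d_p<d$ ramified factors up to $d$ is legitimate precisely because $(1-p^{-\delta})^{-1}\ge 1$.
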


\begin{proof}
For $\Re s > 1$, \cite[(22)]{Duke} ensures that
\begin{equation}\label{eq:ArtinLog}
\log L(s,\chi) = \sum_p \sum_{m=1}^{\infty} \frac{1}{m} \chi(p^m) p^{-ms},
\end{equation}
in which $| \chi(p^m) | \leq d$ \cite[(23)]{Duke}.
Therefore, \eqref{eq:ArtinLog} converges absolutely and
\begin{equation}\label{eq:assembly2}
|\log L(s,\chi)| 
\leq \sum_p \sum_{m=1}^{\infty} \frac{1}{m}| \chi(p^m)| p^{-m \Re s}
\leq d \sum_p\sum_{m=1}^{\infty} \frac{p^{-m \delta}}{m}  
= d \log \zeta(\delta).
\end{equation}
So for $\Re s \geq \delta > 1$, 
$| L(s,\chi) | = |e^{\log L(s,\chi)}| \leq e^{|\log L(s,\chi)|} \leq e^{ d \log \zeta(\delta)} = \zeta(\delta)^d$.
\end{proof}

We need an upper bound for $L(s,\chi)$ on a slightly larger half plane that extends into the
critical strip.
As is customary in the field, we often write $s = \sigma + it$ for a complex variable, in which $\sigma,t \in \R$.
For an analytic function $f(s)$, let $\overline{f}(s) = \overline{f(\overline{s})}$, which is also
analytic. Then 
$\Lambda(s) = \overline{ \Lambda}(1-s)$,
in which $\Lambda(s) = \gamma(s) L(s,\chi)$ and 
\begin{equation*}
\gamma(s) = \epsilon N^{\frac{1}{2}(s- \frac{1}{2})} \prod_{j=1}^d \Gamma_{\R}(s + \mu_j)
\end{equation*}
with $\Gamma_{\R}(s) = \pi^{-s/2} \Gamma(s/2)$ and for some $|\epsilon|=1$
and $\mu_i \in \{0,1\}$ \cite[Sect.~1.3]{Booker}.  In fact, $\mu_i = 0$ exactly $\frac{1}{2}(d+\ell)$ times and $\mu_i=1$
exactly $\frac{1}{2}(d-\ell)$ times, where $\ell$ is the value of the underlying character on
complex conjugation \cite[p.~111]{Duke}.

\begin{lemma}\label{Lemma:BoundL}
$\displaystyle|L(s,\chi)| \leq C_1^d \sqrt{N} |1+s|^{\frac{d}{2}}$ for $\Re s \geq \frac{1}{2}$,
where $C_1 = \tfrac{\zeta( 3/2)}{\sqrt{2\pi}} \approx 1.04219$.\label{p:C1}
\end{lemma}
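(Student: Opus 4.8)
The plan is to split the half-plane $\Re s \ge 1/2$ into the region $\Re s \ge 3/2$, where Lemma~\ref{Lemma:ZetaDelta} applies directly, and the strip $1/2 \le \Re s \le 3/2$, where I would use the functional equation to move the bound from $\Re s \ge 3/2$ onto the line $\Re s = -1/2$ and then interpolate via Phragm\'en--Lindel\"of. For $\Re s \ge 3/2$: Lemma~\ref{Lemma:ZetaDelta} with $\delta = 3/2$ gives $|L(s,\chi)| \le \zeta(3/2)^d$, and since $|1+s| \ge \Re(1+s) \ge 5/2$ while $N \ge (2.91)^d$ by Pizarro-Madariaga, we get $\sqrt{N}\,|1+s|^{d/2} \ge (\tfrac52\cdot 2.91)^{d/2} = (7.275)^{d/2} \ge (2\pi)^{d/2}$, so $\zeta(3/2)^d \le \zeta(3/2)^d(2\pi)^{-d/2}\sqrt{N}\,|1+s|^{d/2} = C_1^d\,\sqrt{N}\,|1+s|^{d/2}$.

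For the strip, start from $\Lambda(s) = \overline{\Lambda}(1-s)$, i.e.\ $|L(s,\chi)| = \dfrac{|\gamma(1-\overline{s})|}{|\gamma(s)|}\,|L(1-\overline{s},\chi)|$, and take $s = -1/2+it$, so that $1-\overline{s} = 3/2+it$ and $|L(1-\overline{s},\chi)| \le \zeta(3/2)^d$ by Lemma~\ref{Lemma:ZetaDelta}. The heart of this step is the exact evaluation of $\gamma(3/2+it)/\gamma(-1/2+it)$: the $N$-part has exponent $\tfrac12(3/2-\tfrac12) - \tfrac12(-1/2-\tfrac12) = 1$, so it contributes $N$, and since $\Gamma_{\R}(z+2)/\Gamma_{\R}(z) = \pi^{-1}\Gamma(z/2+1)/\Gamma(z/2) = z/(2\pi)$, the archimedean part contributes $\prod_{j=1}^d \frac{-1/2+it+\mu_j}{2\pi}$. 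Because $\mu_j \in \{0,1\}$ we have $|-1/2+it+\mu_j| = (1/4+t^2)^{1/2} = |1+s|$, and therefore on $\Re s = -1/2$
\[
|L(s,\chi)| \;\le\; N\left(\frac{\zeta(3/2)}{2\pi}\,|1+s|\right)^{\!d}.
\]

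Now $L(s,\chi)$ is holomorphic and of finite order on $-1/2 \le \Re s \le 3/2$, and the displays above bound it on the two edges. A Phragm\'en--Lindel\"of argument across this strip then yields, for $-1/2 \le \sigma \le 3/2$, a bound of the shape
\[
|L(\sigma+it,\chi)| \;\le\; \zeta(3/2)^d\, N^{\frac{3-2\sigma}{4}}\!\left(\frac{|1+s|}{2\pi}\right)^{\!\frac{d(3-2\sigma)}{4}},
\]
which at $\sigma = 1/2$ is precisely $C_1^d\sqrt{N}\,|1+s|^{d/2}$. For $1/2 \le \sigma \le 3/2$ the quotient of the right side above by $C_1^d\sqrt{N}\,|1+s|^{d/2}$ equals $\big(N|1+s|^d/(2\pi)^d\big)^{(1-2\sigma)/4}$, which is $\le 1$ as soon as $N|1+s|^d \ge (2\pi)^d$; I would deduce this last inequality for $\Re s \ge 1/2$ from $N \ge (2.91)^d$, using a sharper convexity-type estimate in the small bounded region near $\sigma = 1$ with $|t|$ small where it becomes tight.

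The Phragm\'en--Lindel\"of step is the main obstacle. Since $|L|$ grows polynomially in $\Im s$ on the line $\Re s = -1/2$, the elementary three-lines lemma for bounded functions is not available, and one must instead apply it to an auxiliary function such as $L(s,\chi)/(1+s)^{w(s)}$ with $w$ affine (holomorphic on the strip, as $1+s$ is zero-free there), verify it is of finite order and bounded on both edges, and then control the branch factors $e^{-\Im w(s)\arg(1+s)}$ closely enough that the final envelope is exactly $C_1^d\sqrt{N}\,|1+s|^{d/2}$ with no unspecified constant. Carrying this through while keeping every numerical constant explicit---and checking that the resulting bound really dominates $C_1^d\sqrt{N}\,|1+s|^{d/2}$ uniformly for $\Re s \ge 1/2$---is the delicate part; an alternative route through the Hadamard factorization of $\Lambda(s) = \gamma(s)L(s,\chi)$ and the explicit form of $\gamma$ would face the same bookkeeping.
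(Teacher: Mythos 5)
Your treatment of the half-plane $\Re s\ge \tfrac32$ is fine, and your functional-equation computation on the line $\sigma=-\tfrac12$ is correct (it is exactly the statement that $|X(s)Q(s)|=N\,|1+s|^{d}/(2\pi)^{d}$ there). However, the argument as proposed has two genuine gaps. First, the Phragm\'en--Lindel\"of interpolation across $-\tfrac12\le\sigma\le\tfrac32$ with boundary data growing like $|1+s|^{d}$ is never carried out; you yourself flag it as ``the main obstacle.'' Doing this cleanly, with no unspecified constants, is precisely what the paper outsources to Booker's Lemma~4.1, which gives the exact inequality $|L(s,\chi)|^{2}\le |X(s)Q(s)|\sup_{\sigma=3/2}|L(s,\chi)|^{2}$ on that strip; so this part of your plan is a re-derivation of a cited result, left unfinished at its hardest point.

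Second, and more seriously, the final comparison step fails on a genuine region. You need $\bigl(N|1+s|^{d}/(2\pi)^{d}\bigr)^{(1-2\sigma)/4}\le 1$ for $\tfrac12<\sigma\le\tfrac32$, i.e.\ $N|1+s|^{d}\ge(2\pi)^{d}$, and this does \emph{not} follow from $N\ge(2.91)^{d}$: for $d=1$, $N=3$ and $s=\tfrac12+it$ with $|t|\lesssim 1.46$ one has $N|1+s|<2\pi$, and the same failure persists for $\sigma$ up to about $1.09$ (and for all $d$, since $2.91\cdot(1+\sigma)$ can stay below $2\pi$). In that bounded region your convexity bound strictly exceeds the target $C_1^{d}\sqrt{N}\,|1+s|^{d/2}$, the promised ``sharper convexity-type estimate near $\sigma=1$, $|t|$ small'' is not supplied, and nothing elementary is available there (Lemma~\ref{Lemma:ZetaDelta} degenerates as $\sigma\to1^{+}$). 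The paper's proof is structured to avoid this entirely: it uses Booker's inequality only on the line $\sigma=\tfrac12$, where $|X(s)|=1$, to get $|L|\le\zeta(\tfrac32)^{d}\sqrt{|Q(s)|}$ there, checks the same bound for $\sigma\ge 2\pi$, and then applies Phragm\'en--Lindel\"of to the analytic function $L/\sqrt{Q}$ on the half-plane $\sigma\ge\tfrac12$; since only the upper bound $|Q(s)|\le N|1+s|^{d}/(2\pi)^{d}$ is ever used, no lower bound on $N|1+s|^{d}$ is needed anywhere. To repair your route you would either need to prove the interpolated bound in the weighted form $\zeta(\tfrac32)^{d}\sqrt{|Q(s)|}$ (i.e.\ interpolate $L/\sqrt{Q}$, which is essentially the paper's argument) or supply a separate explicit bound for $|L|$ in the exceptional region, which is not routine.
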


\begin{proof}
The analytic conductor
\begin{equation*}
Q(s) = N \prod_{j=1}^d \frac{s + \mu_j}{2\pi}
\end{equation*}
satisfies
\begin{equation}\label{eq:QBound}
|Q(s)| \leq \frac{N}{(2\pi)^d} |1+s|^d
\quad \text{for $\sigma \geq - \tfrac{1}{2}$}.
\end{equation}
Let $X(s) =  \overline{ \gamma}(1-s) / \gamma(s)$,
so that the functional equation becomes
$L(s,\chi) = X(s) \overline{L}(1-s,\chi)$ \cite[p.~387]{Booker} (note that \cite{Booker} uses $\chi$ for our $X$;
we have already reserved $\chi$ for the character of $L$).
A special case of \cite[Lem.~4.1]{Booker} 
implies that
\begin{equation*}
|L(s,\chi)|^2 \leq  | X(s) Q(s)| \sup_{\sigma = \frac{3}{2}} | L(s,\chi)|^2
\qquad \text{for $- \tfrac{1}{2} \leq \sigma \leq \tfrac{3}{2}$}.
\end{equation*}
Since $|X(s)|=1$ for $\sigma = \frac{1}{2}$, the previous inequality, \eqref{eq:QBound},
and Lemma \ref{Lemma:ZetaDelta} yield
\begin{equation}\label{eq:Phrag1}\qquad
|L(s,\chi)| \leq \zeta( \tfrac{3}{2})^d \sqrt{ |Q(s)|} 
\qquad\text{for $\sigma= \tfrac{1}{2}$}.
\end{equation}
Lemma \ref{Lemma:ZetaDelta} and the definition of $Q$ ensure that
\begin{equation}\label{eq:Phrag2}
|L(s,\chi)| \leq \zeta(2\pi)^d \leq \zeta( \tfrac{3}{2})^d  \sqrt{ |Q(s)|}
\qquad\text{for $\sigma\geq 2\pi$}.
\end{equation}
Since $L / \sqrt{Q}$ is analytic for $\sigma \geq \frac{1}{2}$,
the Phragm\'en--Lindel\"of principle and \eqref{eq:QBound} imply
that the desired inequality holds for $\sigma \geq \frac{1}{2}$.
\end{proof}

\begin{remark}
Let us be more explicit about the final step in the proof of Lemma \ref{Lemma:BoundL}.
As mentioned above, $L(s,\chi)$ is an entire function of finite order \cite[Thm.~5.6]{iwaniec2004analytic}.  Thus,
$|L(s,\chi)| = O(\exp|t|^{\alpha})$ for some $\alpha \geq 1$ and all $s\in \C$; this growth estimate permits us to
appeal to the Phragm\'en--Lindel\"of principle below.

Suppose that $\frac{1}{2} \leq \Re s \leq \sigma_0$, in which $\sigma_0 \geq 2 \pi$.
The bounds \eqref{eq:Phrag1} and \eqref{eq:Phrag2} ensure that $L/\sqrt{Q}$ is bounded by $\zeta(3/2)^d$
on the boundary of the vertical strip $\{ \sigma + it : \frac{1}{2} \leq \sigma \leq \sigma_0\}$.
Consequently, the Phragm\'en--Lindel\"of principle for a strip \cite[Thm.~8.1.3]{MurtyAnalytic}
ensures that the same bound holds on the interior. In particular, it holds at $s$,
which was an arbitrary point with $\Re s \geq \frac{1}{2}$.
\end{remark}

\subsection{An explicit bound for the logarithmic derivative}\label{Subsection:DukeLogDerivative}
In what follows, let
\begin{equation*}\label{p:eta}
1 < \eta \leq \tfrac{3}{2} \qquad \text{and} \qquad 0 < \delta < \tfrac{1}{2}.
\end{equation*}
These constants will be optimized toward the end of the proof.  In particular,
the range of admissible $\delta$ shall be further restricted as new information emerges.

\begin{lemma}\label{Lemma:LL384}
Let $L(s,\chi)$ be an entire Artin $L$-function that satisfies GRH, in which 
$\chi$ has degree $d$ and conductor $N$.  
For $\tfrac{1}{2}+\delta \leq \sigma \leq 2\eta - \tfrac{1}{2} -\delta$,
\begin{equation}\label{eq:TTLL}
\bigg| \frac{L'(s,\chi)}{L(s,\chi)} \bigg| 
\leq  C_2 d \log\big( C_3 N^{\frac{1}{d}} (|t|+4)  \big),
\end{equation}
where
\begin{equation}\label{eq:C2-3}
C_2:= C_2(\delta,\eta) = \frac{2\eta-1}{2\delta^2}
\qquad \text{and} \qquad
C_3:= C_3(\eta) =  \left(C_1 \frac{\zeta(\eta)}{\zeta(2\eta)} \right)^2.
\end{equation}
\end{lemma}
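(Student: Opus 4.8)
The plan is to bound $L'/L$ via a Borel--Carath\'eodory-type argument applied to $\log L$, exploiting the analyticity of $\log L$ on $\Re s > 1/2$ (a consequence of GRH) together with the explicit upper bound for $|L|$ furnished by Lemma \ref{Lemma:BoundL}. The standard mechanism is this: if $g$ is analytic on a disc $|s - a| \leq R$ and $\Re g$ is bounded above there, then the derivative $g'$ at a concentric point of radius $r < R$ is controlled by $\frac{2R}{(R-r)^2}\big(\sup_{|s-a|=R}\Re g - \Re g(a)\big)$. Applying this with $g = \log L(\cdot,\chi)$ converts the modulus bound on $L$ into a bound on $L'/L$, and the shape of the constant $C_2 = \frac{2\eta-1}{2\delta^2}$ strongly suggests exactly this: the outer radius will be $R = \eta - \frac12$ (so that the disc reaches the line $\Re s = 1$ on one side and stays inside $\Re s > \frac12$ — in fact its left edge sits at $\sigma = 1 - \eta \geq -\frac12$, matching the hypothesis $\eta \le \frac32$), and the inner radius will be $r = R - \delta = \eta - \frac12 - \delta$, giving $\frac{2R}{(R-r)^2} = \frac{2\eta-1}{2\delta^2}$ after a factor of $d$ is extracted. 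Centering the disc appropriately at $a = 1 + it_0$ (or a real translate thereof) places any target point $s$ with $\frac12 + \delta \le \sigma \le 2\eta - \frac12 - \delta$ within the radius-$r$ disc.

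First I would fix the target point $s = \sigma + it$ and choose the center $a = (\eta + \tfrac12) + it$ — wait, more carefully, one wants $a$ real-part equal to something so that the disc of radius $r$ about $a$ covers the whole admissible $\sigma$-strip $[\frac12+\delta,\, 2\eta-\frac12-\delta]$ while the disc of radius $R = \eta - \frac12$ about $a$ stays in $\Re s > \frac12$; taking $\Re a = \eta$ works, since then the $R$-disc has left edge at $\sigma = 1-\eta \ge -\frac12$ but we only need $\log L$ analytic, which holds for $\sigma > \frac12$ — so instead I would shrink to $\Re a = \eta$ only if $1-\eta > \frac12$, i.e. never; the honest choice is to take $R$ slightly smaller or to note $\log L$ extends. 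The cleanest route: pick center $a = \eta + it$ with outer radius $R = \eta - \frac12$, so the outer circle is tangent to $\Re s = \frac12$ from the right and $\log L$ is analytic on the open disc, handling the boundary by a limiting argument or by taking $R = \eta - \frac12 - \epsilon$ and letting $\epsilon \to 0$. On $|s-a| = R$ we have $\Re s \ge \frac12$, so Lemma \ref{Lemma:BoundL} gives $|L(s,\chi)| \le C_1^d \sqrt N \,|1+s|^{d/2}$, hence $\Re \log L \le \log|L| \le d\log C_1 + \frac12\log N + \frac{d}{2}\log|1+s|$; since $|1+s| \le 1 + |a| + R \le$ something linear in $|t|$, this is $\le \frac{d}{2}\log\big(C_1^2 N^{1/d}(\text{linear in }|t|)^{?}\big)$ up to bookkeeping, and one reads off a bound of the form $\frac{d}{2}\log\big(C N^{1/d}(|t|+c)\big)$.

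Next I would produce a \emph{lower} bound for $\Re \log L(a,\chi) = \log|L(a,\chi)|$ at the center. Here I cannot use Lemma \ref{Lemma:BoundL} (which is an upper bound); instead I use that $\Re a = \eta > 1$, so the Dirichlet series \eqref{eq:ArtinLog} converges absolutely and $|\log L(a,\chi)| \le d\log\zeta(\eta)$ by the estimate in \eqref{eq:assembly2}, whence $\Re\log L(a,\chi) \ge -d\log\zeta(\eta)$. Combining, $\sup_{|s-a|=R}\Re\log L - \Re\log L(a) \le \frac{d}{2}\log N^{1/d} + d\log(C_1\zeta(\eta)) + \frac{d}{2}\log(|t|+c) + \dots$; one then groups the $\zeta(\eta)$ and $C_1$ factors, and the $\zeta(2\eta)^{-1}$ appearing in $C_3$ presumably enters through a sharper packaging of the Dirichlet-series bound or through writing $\log\zeta(\eta) = \log\frac{\zeta(\eta)}{\zeta(2\eta)} + \log\zeta(2\eta)$ and absorbing — this is the one piece of the constant-chasing I would need to reverse-engineer to land exactly $C_3 = (C_1\zeta(\eta)/\zeta(2\eta))^2$. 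Feeding this into Borel--Carath\'eodory with $\frac{2R}{(R-r)^2} = \frac{2\eta-1}{2\delta^2}$ yields $|L'/L(s,\chi)| \le \frac{2\eta-1}{2\delta^2}\cdot d \log\big(C_3 N^{1/d}(|t|+4)\big) = C_2 d\log\big(C_3 N^{1/d}(|t|+4)\big)$, with the $+4$ coming from the crude bound $|1+s| \le |t|+4$ valid on the relevant disc once $\eta \le \frac32$. The main obstacle is purely bookkeeping precision: matching the constant $C_3$ exactly and verifying that the choice of center, radii $R = \eta-\frac12$ and $r = R-\delta$, and the linear-in-$|t|$ estimate all conspire to give the clean form \eqref{eq:TTLL} with the stated $C_2, C_3$ and the specific shift $4$ — in particular one must check $2\eta - \frac12 - \delta \le \Re a + r$ so the strip is genuinely covered, which holds since $\Re a + r = \eta + (\eta - \frac12 - \delta) = 2\eta - \frac12 - \delta$ with equality, confirming the endpoints are handled exactly.
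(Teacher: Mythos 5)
Your overall mechanism is exactly the paper's: center the disc at $\eta+it$, take outer radius $R=\eta-\tfrac12$ and evaluate at points within radius $R-\delta$, bound $\Re\log L$ from above on the disc via Lemma \ref{Lemma:BoundL} (giving $d\log C_1+\tfrac12\log N+\tfrac d2\log(|t|+4)$, the $+4$ absorbing $|\Im\xi|<R<1$), bound $\Re\log L$ from below at the center, and apply a Borel--Carath\'eodory-type derivative estimate of the form $|f'(z)|\le\frac{2R}{(R-|z|)^2}\sup_{|\xi|<R}\Re(f(\xi)-f(0))$ (the paper uses the sharp real-part theorem \cite[Cor.~5.3]{Kresin}); the factor-of-two bookkeeping you flag does work out, since $\frac{2R}{(R-|z|)^2}\le\frac{2\eta-1}{\delta^2}$ and the extra $\tfrac12$ in $C_2$ comes from factoring $\tfrac d2$ out of the supremum after writing $\tfrac12\log N=\tfrac d2\log N^{1/d}$.

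The genuine gap is the one you yourself name: your lower bound at the center, $\Re\log L(\eta+it,\chi)\ge -d\log\zeta(\eta)$ from \eqref{eq:assembly2}, is too weak to produce the stated constant $C_3=(C_1\zeta(\eta)/\zeta(2\eta))^2$; fed through the rest of your argument it yields only the weaker bound with $(C_1\zeta(\eta))^2$ in place of $C_3$, so the lemma as stated is not proved. The missing ingredient is a genuine pointwise lower bound on $|L(\eta+it,\chi)|$, not merely a bound on $|\log L|$: the paper invokes \cite[Lem.~4.5]{Booker} with $\theta=0$, which gives $|L(\eta+it,\chi)|\ge(\zeta(2\eta)/\zeta(\eta))^d$. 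This can also be obtained directly from the Euler product: each local factor has inverse roots of modulus at most $1$, so $|L_p(\eta+it,\chi)|^{-1}\le(1+p^{-\eta})^d=\bigl(\tfrac{1-p^{-2\eta}}{1-p^{-\eta}}\bigr)^d$, and taking the product over $p$ gives $|L(\eta+it,\chi)|\ge\prod_p\bigl(\tfrac{1-p^{-\eta}}{1-p^{-2\eta}}\bigr)^d=(\zeta(2\eta)/\zeta(\eta))^d$, hence $\Re\log L(\eta+it,\chi)\ge -d\log\tfrac{\zeta(\eta)}{\zeta(2\eta)}$, which is precisely what makes $\zeta(2\eta)$ appear in $C_3$. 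With that substitution your argument closes and coincides with the paper's proof.
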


\begin{proof}
Since $\zeta(\sigma)/\sqrt{\sigma+1}$ is decreasing for $\sigma>1$,
computation confirms that
$\zeta(\sigma) \leq C_1 \sqrt{\sigma+1}$ for $\sigma \geq 1.89$.
For $\sigma \geq 2$, Lemma \ref{Lemma:ZetaDelta} and the integrality of $N$ imply that
\begin{equation*}
|L(s,\chi)| \leq \zeta(2)^d \leq C_1^d  (2+1)^{\frac{d}{2}}
\leq C_1^d \sqrt{N} ( | t| + 3)^{\frac{d}{2}}.
\end{equation*}
On the other hand, Lemma \ref{Lemma:BoundL} ensures that 
\begin{equation*}
|L(s,\chi)| \leq C_1^d \sqrt{N}|1+s|^{\frac{d}{2}}
\leq C_1^d \sqrt{N}(|t|+3)^{\frac{d}{2}}
\end{equation*}
for $\frac{1}{2} \leq \sigma \leq 2$.  Therefore,
\begin{equation*}
\Re \log L(s,\chi) 
= \log|L(s,\chi)|
< d \log C_1 + \tfrac{1}{2}\log N + \tfrac{d}{2} \log \left( | \Im s|+3\right)
\end{equation*}
for $\sigma > \frac{1}{2}$.  Next observe that \cite[Lem.~4.5]{Booker} with $\theta =0$ provides
\begin{equation*}
\left( \frac{\zeta(2\eta)}{\zeta(\eta)} \right)^d \leq \,  |L(\eta+it,\chi)| 
\end{equation*}
since $\eta > 1$.  Therefore,
\begin{equation*}
\Re \log L(\eta+it,\chi) = \log |L(\eta+it,\chi)|
\geq d\log \frac{\zeta(2\eta)}{\zeta(\eta)}.
\end{equation*}
For $f$ analytic on $|z| < R$,  \cite[Cor.~5.3]{Kresin} says that
\begin{equation*}
|f'(z)| \leq \frac{2R}{(R-|z|)^2} \sup_{|\xi|< R} \Re (f(\xi)-f(0)).
\end{equation*}
Fix $t\in \R$ and apply the inequality above to
$f(z) = \log L(z+ \eta+it,\chi)$
with
\begin{equation*}
R = \eta-\tfrac{1}{2}   \qquad \text{and} \qquad  |\underbrace{s  - (\eta+it) }_z| \leq R - \delta
\end{equation*}
so that 
\begin{equation*}
\tfrac{1}{2}+\delta 
\,\,=\,\, \eta - R + \delta 
\,\,\leq\,\, \Re s
\,\,\leq\,\, \eta + R - \delta 
\,\,=\,\,
 2 \eta - \tfrac{1}{2} - \delta.
\end{equation*}
As $z$ ranges over $|z| \leq R - \delta$, observe that
$s$ assumes every value in the horizontal segment
$[\frac{1}{2}+\delta + it ,\,  2 \eta - \tfrac{1}{2} - \delta + it]$;
see Figure \ref{Figure:Circles}.  
Since $R = \eta - \frac{1}{2} < \frac{3}{2} - \frac{1}{2} = 1$, it follows that
$|\Im \xi| < 1$ whenever $|\xi| < R$.  Therefore,
\begin{align*}
    \bigg| \frac{L'(s,\chi)}{L(s,\chi)} \bigg|
    &\leq \frac{2(\eta-\frac{1}{2})}{  (R - (R-\delta))^2} 
    \sup_{|\xi| < R}  \Re \big(\log L(\xi + \eta + it,\chi) - \log L(\eta+it,\chi)\big) \\
    &\leq \frac{2\eta-1}{\delta^2}
    \sup_{ |\xi| < R} \big(d \log C_1 + \tfrac{1}{2}\log N + \tfrac{d}{2} \log \left( |t+\Im \xi|+3\right)- 
    \log L(\eta+it,\chi) \big)\\
    &\leq \frac{2\eta-1}{\delta^2}   \left(d \log C_1 + \tfrac{1}{2}\log N + \tfrac{d}{2} \log \left( |t|+4\right)- 
    d \log \frac{\zeta(2\eta)}{\zeta(\eta)}\right)\\
    &\leq \frac{(2\eta-1)d}{2\delta^2}      \left(2 \log C_1 + \log N^{\frac{1}{d}} +  \log \left( |t|+4\right)- 
    2\log\frac{\zeta(2\eta)}{\zeta(\eta)}  \right)\\
    &\leq \frac{(2\eta-1)d}{2\delta^2} \log\left[ C_1^2 N^{\frac{1}{d}} \left(|t|+4\right) 
    \left( \frac{\zeta(\eta)}{\zeta(2\eta)} \right)^2 \right]\\
    &\leq C_2 d \log\big( C_3 N^{\frac{1}{d}} (|t|+4)  \big).
\end{align*}
Since $t \in \R$ is arbitrary, the desired bound holds
for $\frac{1}{2}+\delta \leq \Re s \leq 2\eta - \frac{1}{2} -\delta$.
\end{proof}

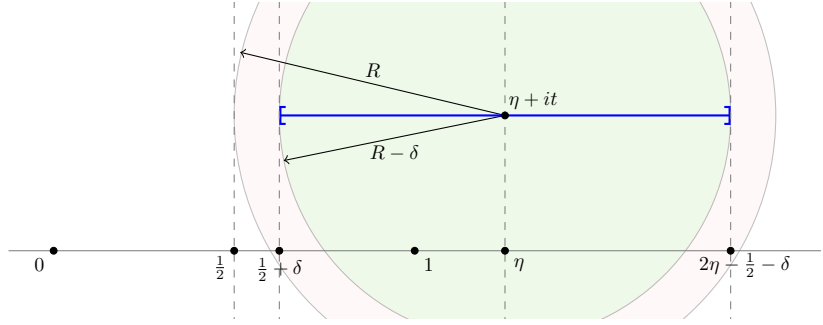
\begin{figure}
    \begin{tikzpicture}[scale=0.6, every node/.style={scale=0.75}]
        \clip(-5,-1.5) rectangle (13.5,5.5);

        \filldraw[fill=red!10!white, opacity=0.25] (6,3) circle (6cm);
        \filldraw[fill=green!25!white,opacity=0.25] (6,3) circle (5cm);
        \draw[->] (6,3)--(0.135,4.4) node[midway, above]{$R$};
        \draw[->] (6,3)--(1.1,2) node[midway, below]{$R-\delta$};
        \draw[blue,thick,{[-]}] (1,3)--(11,3);
       
       \draw[thin,gray](-5,0)--(13,0);
       \draw[thin,gray,dashed](0,-2)--(0,6);
       \draw[thin,gray,dashed](1,-2)--(1,6);
       \draw[thin,gray,dashed](6,-2)--(6,6);
       \draw[thin,gray,dashed](11,-2)--(11,6);

        \filldraw[fill=black] (-4,0) circle (0.075cm)node[xshift=-0.25cm,yshift=-0.25cm]{$0$};
        \filldraw[fill=black] (0,0) circle (0.075cm)node[xshift=-0.25cm,yshift=-0.3cm]{$\frac{1}{2}$};
        \filldraw[fill=black] (1,0) circle (0.075cm)node[below]{$\frac{1}{2}+ \delta$};
        \filldraw[fill=black] (4,0) circle (0.075cm)node[xshift=.25cm,yshift=-0.25cm]{$1$};
        \filldraw[fill=black] (6,0) circle (0.075cm)node[xshift=0.25cm,yshift=-0.25cm]{$\eta$};
        \filldraw[fill=black] (11,0) circle (0.075cm)node[xshift=0.25cm,yshift=-0.25cm]{$2\eta-\frac{1}{2}-\delta$};

        \filldraw[fill=black] (6,3) circle (0.075cm)node[above, xshift=0.5cm]{$\eta+it$};
    \end{tikzpicture}
\caption{For $t \in \R$, the inequality \eqref{eq:TTLL}
holds for $s$ in the green region.  In particular, it holds for all $s$ on the blue line segment.  Since $t \in \R$ is arbitrary, \eqref{eq:TTLL} holds for
$\Re s \in [ \frac{1}{2} + \delta, 2 \eta - \frac{1}{2} - \delta]$.}
\label{Figure:Circles}
\end{figure}

Duke's approach to the previous lemma uses an estimate result of Littlewood \cite[Lem.~4]{Littlewood}, which is \cite[Lem.~2]{Duke} in Duke's paper, instead of the slightly sharper \cite[Cor.~5.3]{Kresin} used above. More information about ``sharp real part theorems'' for the derivative can be found in \cite[Ch.~5]{Kresin}, along with a host of historical references.

\subsection{An integral estimate}\label{Subsection:Gamma}
The proof of Theorem \ref{Theorem:Duke} requires an integral estimate that involves the gamma function.
First, consider the real-valued function
\begin{equation}\label{eq:fFunction}
f(s) = \frac{\sqrt{2\pi}|s+1|^{\sigma+\frac{1}{2}}}{|s|}\exp\bigg(\frac{1}{6|s+1|}\bigg)
\end{equation}
for $s = \sigma+ it$ with $\sigma \in [-1+\delta , -\frac{1}{2} + \delta]$.  Since
\begin{equation*}
0 \leq f(s)  \leq  \frac{ \sqrt{2\pi} e^{\frac{1}{6 \delta}}(\tfrac{1}{2}+\delta+|t|)^{\delta}}{|t|} \to 0
\end{equation*}
uniformly as $|t| \to \infty$ for such $\sigma$, we may use numerical methods to maximize
$f(s)$ in the vertical strip $\sigma \in [-1+\delta , -\frac{1}{2} + \delta]$.  
Let 
\begin{equation}\label{eq:tau}
\tau = 0.219733068786773\ldots
\end{equation}
denote the unique root of
$f(-1+\delta) - f(-\tfrac{1}{2}+\delta)$ with $0 < \delta < \tfrac{1}{2}$;
see Figure \ref{Figure:Tau}.  Computation says that 
for $0 < \delta \leq \tau$, the function $f(s)$ attains its maximum value
\begin{equation}\label{eq:mu}
\mu =
\dfrac{\sqrt{2 \pi }  \delta ^{\delta -\frac{1}{2}} e^{\frac{1}{6 \delta}}} {1- \delta  }
\end{equation} 
at $s=-1+\delta$; see Figure \ref{Figure:Surface}.

\begin{figure}
\centering
\begin{subfigure}[t]{0.475\textwidth}
\centering
\includegraphics[width=1\textwidth]{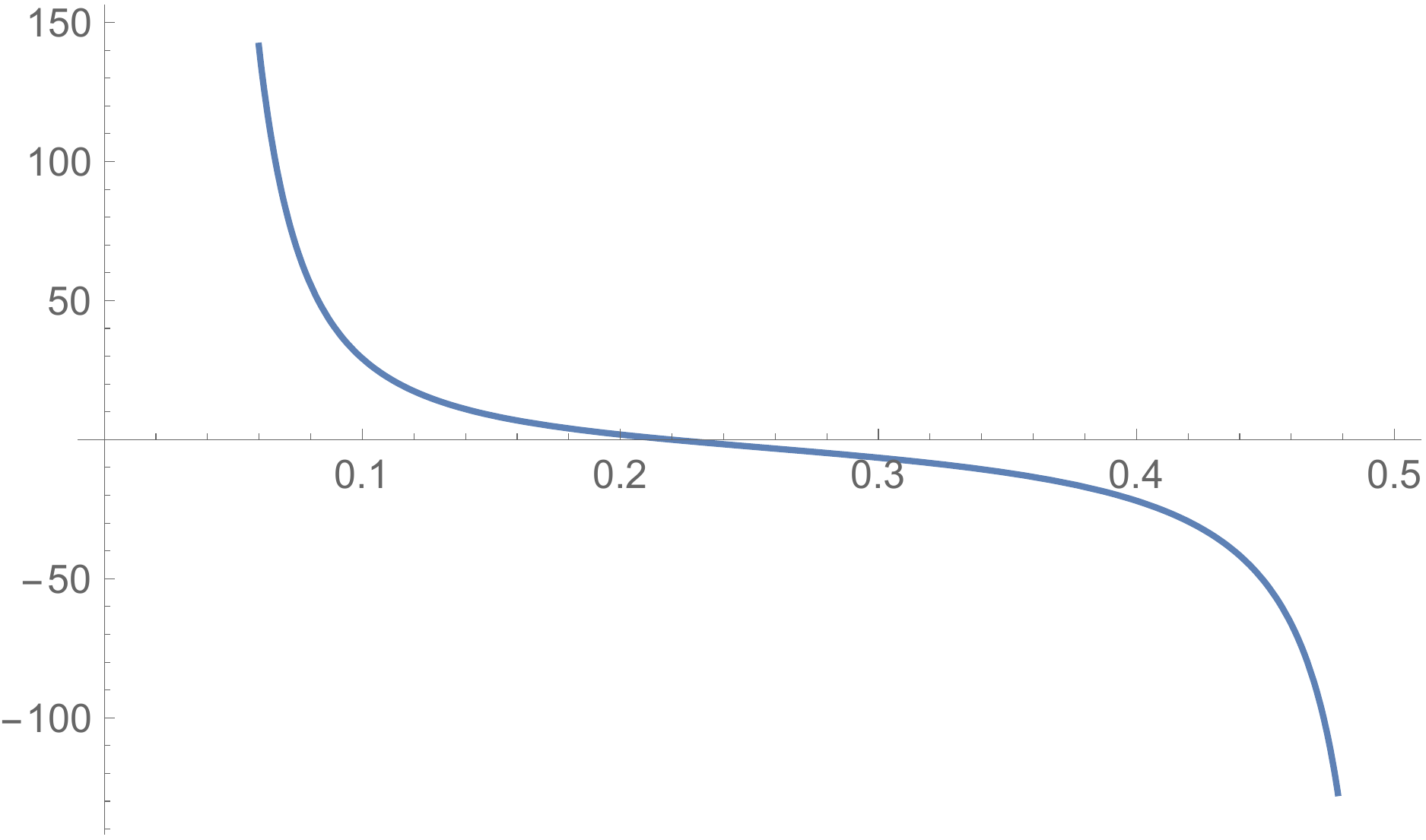}
\caption{$0 < \delta < \frac{1}{2}$}
\label{Figure:Tau1}
\end{subfigure}
\quad
\begin{subfigure}[t]{0.475\textwidth}
\centering
\includegraphics[width=1\textwidth]{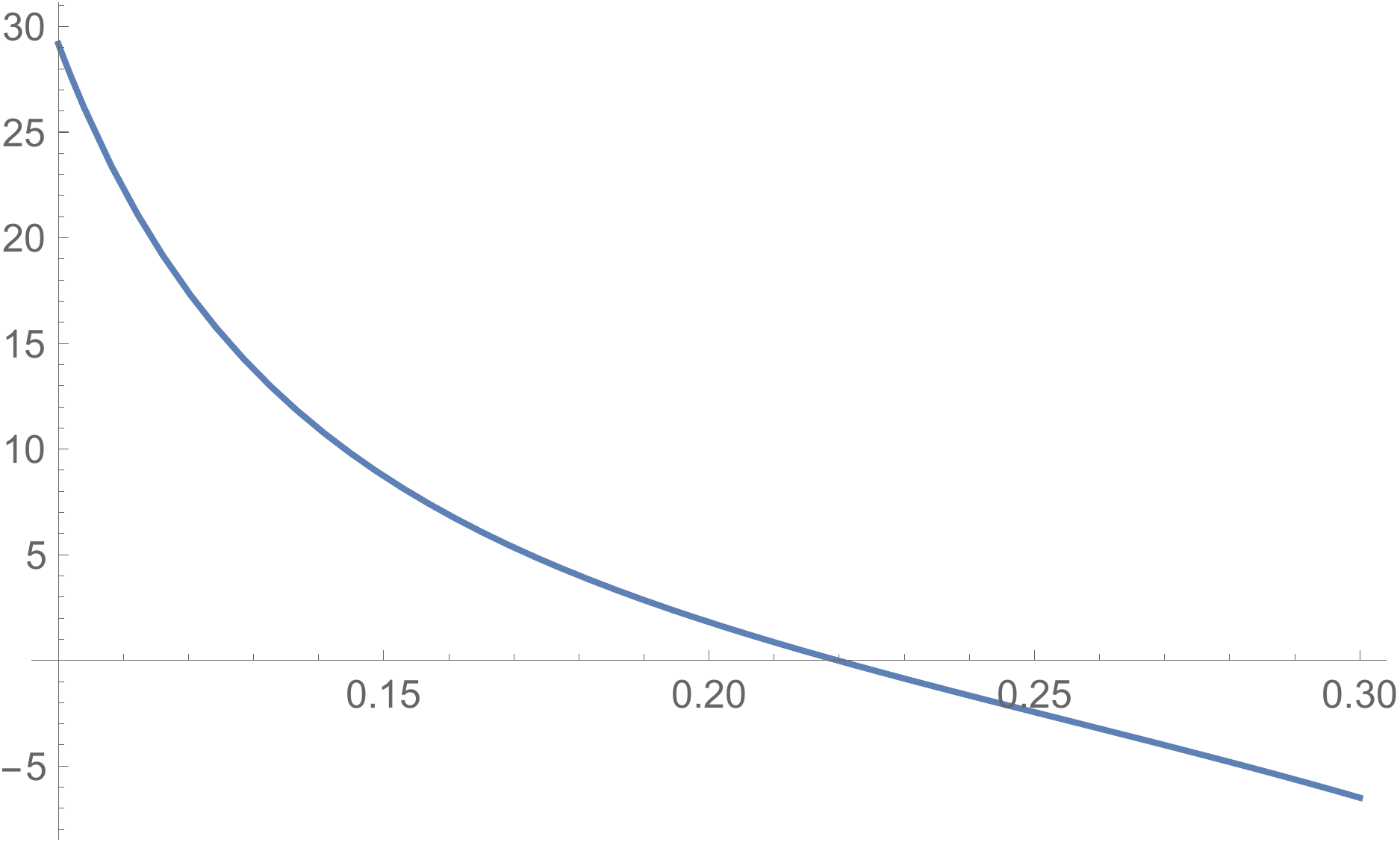}
\caption{$0.1 < \delta < 0.3$}
\label{Figure:Tau2}
\end{subfigure}
\caption{For $0 < \delta \leq \frac{1}{2}$, the function $f(-1+\delta) - f(-\tfrac{1}{2}+\delta)$ 
has a unique real root $\tau = 0.219733068786773\ldots$.}
\label{Figure:Tau}
\end{figure}

\begin{lemma}\label{Lemma:GammaLog}
For $0 < \delta \leq \tau$ and $-1+\delta \leq \sigma \leq -\frac{1}{2} + \delta$,
\begin{equation*}
 \int_{-\infty}^{\infty}|\Gamma(\sigma+it)| 
 \log\big( C_3 N^{\frac{1}{d}} (|t|+4 ) \big) \,dt 
 \,\leq\, \frac{C_4 \log N}{d} + C_5,
\end{equation*}
in which
\begin{align}
C_4 &:= C_4(\delta) =
\frac{ 4 \sqrt{2} \, \delta ^{\delta -\frac{1}{2}} e^{\frac{1}{6 \delta }} }{\sqrt{\pi}(1-\delta )}
 \qquad \text{and} \label{eq:C4} \\
C_5 &:= C_5(\delta, \eta) = C_4\Big(\log  C_3 + \frac{\pi}{2} \Big).\label{eq:C5}
\end{align}
\end{lemma}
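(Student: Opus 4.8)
The plan is to produce a clean pointwise majorant for $|\Gamma(\sigma+it)|$ on the strip and then integrate it against the logarithmic factor.

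\emph{Pointwise bound.} First I would use $\Gamma(s) = \Gamma(s+1)/s$. Since $-1+\delta \le \sigma \le -\tfrac12+\delta$ forces $\Re(s+1) = \sigma+1 \ge \delta > 0$, Stirling's formula in the right half-plane (with error term bounded by $\tfrac{1}{6|w|}$) gives
\[
|\Gamma(s+1)| \;\le\; \sqrt{2\pi}\,|s+1|^{\sigma+\frac12}\,e^{-(\sigma+1)}\,e^{-t\arg(s+1)}\,e^{\frac{1}{6|s+1|}},
\]
where the factor $e^{-t\arg(s+1)}$ arises from $|(s+1)^{s+1/2}| = |s+1|^{\sigma+\frac12}e^{-t\arg(s+1)}$ for the principal branch. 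Dividing by $|s|$ yields $|\Gamma(s)| \le f(s)\,e^{-(\sigma+1)-t\arg(s+1)}$, with $f$ as in \eqref{eq:fFunction}. To tame the exponential factor, note that $\sigma+1>0$ makes $\arg(s+1)$ have the sign of $t$, so $t\arg(s+1) = |t|\arctan\!\tfrac{|t|}{\sigma+1}\ge 0$; and since $\arctan\tfrac{|t|}{\sigma+1} = \tfrac\pi2 - \arctan\tfrac{\sigma+1}{|t|} \ge \tfrac\pi2 - \tfrac{\sigma+1}{|t|}$ for $t\ne 0$ (the case $t=0$ being trivial), one gets $(\sigma+1) + |t|\arctan\tfrac{|t|}{\sigma+1} \ge \tfrac{\pi|t|}{2}$. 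The two copies of $\sigma+1$ cancel, so $e^{-(\sigma+1)-t\arg(s+1)} \le e^{-\pi|t|/2}$. Combining this with the fact, recorded just before the lemma, that $\mu$ of \eqref{eq:mu} is the maximum of $f$ on the strip when $0<\delta\le\tau$, I obtain
\[
|\Gamma(\sigma+it)| \;\le\; \mu\, e^{-\pi|t|/2}, \qquad -1+\delta \le \sigma \le -\tfrac12+\delta.
\]

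\emph{Integration.} Writing $\log\!\big(C_3 N^{1/d}(|t|+4)\big) = \big(\tfrac1d\log N + \log C_3\big) + \log(|t|+4)$ (the integrand being nonnegative since $C_3>1$), the pointwise bound gives
\[
\int_{-\infty}^{\infty}|\Gamma(\sigma+it)|\log\!\big(C_3 N^{\frac1d}(|t|+4)\big)\,dt \;\le\; \mu\Big(\tfrac1d\log N + \log C_3\Big)\!\int_{-\infty}^{\infty}\! e^{-\pi|t|/2}\,dt \;+\; 2\mu\!\int_{0}^{\infty}\! e^{-\pi t/2}\log(t+4)\,dt.
\]
Now $\int_{-\infty}^{\infty} e^{-\pi|t|/2}\,dt = \tfrac4\pi$ and $C_4 = \tfrac4\pi\mu$, so the first term is exactly $\tfrac{C_4\log N}{d} + C_4\log C_3$; and the elementary estimate $\log(t+4) \le \log 4 + \tfrac t4$ gives $\int_0^\infty e^{-\pi t/2}\log(t+4)\,dt \le \tfrac{2\log 4}{\pi} + \tfrac{1}{\pi^2} < 1$, so the second term is at most $2\mu = \tfrac\pi2 C_4$. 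Summing, the right-hand side is $\tfrac{C_4\log N}{d} + C_4\big(\log C_3 + \tfrac\pi2\big) = \tfrac{C_4\log N}{d} + C_5$, which is the claim.

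The delicate point is the pointwise step: one must track the branch of $(s+1)^{s+1/2}$ (equivalently, $\arg(s+1)$) carefully so that precisely the function $f$ of \eqref{eq:fFunction}, with its $\exp\tfrac{1}{6|s+1|}$ factor, emerges, and the constraint $\sigma+1>0$ must be used both to apply Stirling's formula and to legitimize the identity for $\arctan$. The exact constant $4/\pi$ in the final bound then results from the cancellation $e^{-(\sigma+1)}\cdot e^{\sigma+1}=1$ together with $\int_{-\infty}^{\infty}e^{-\pi|t|/2}\,dt = 4/\pi$; the slightly lossy choice $C_5 = C_4(\log C_3 + \tfrac\pi2)$ simply rounds the harmless constant $2\int_0^\infty e^{-\pi t/2}\log(t+4)\,dt$ up to $\tfrac\pi2 C_4$.
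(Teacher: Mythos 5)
Your proof is correct and follows essentially the same route as the paper: reduce to the pointwise bound $|\Gamma(\sigma+it)|\leq \mu\,e^{-\pi|t|/2}$ via $f(s)$ and the maximum $\mu$ from \eqref{eq:mu}, then split the logarithm and integrate the exponential. The only difference is that you re-derive the inequality $|\Gamma(s)|\leq\sqrt{2\pi}\,|s|^{\sigma-1/2}e^{-\pi|t|/2}\exp(\tfrac{1}{6|s|})$ from Stirling's formula (correctly, including the $\arctan$ step), whereas the paper simply cites it as \cite[5.6E9]{NIST}; both arguments otherwise coincide, including the reliance on the computationally verified maximization of $f$ on the strip.
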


\begin{proof}
For $s = \sigma+it$ with $\sigma \geq 0$, we have the inequality \cite[5.6E9]{NIST}:
\begin{equation*}
|\Gamma(s)| \leq \sqrt{2\pi} |s|^{\sigma - \frac{1}{2}} e^{-\pi|t|/2} \exp\bigg(\frac{1}{6|s|}\bigg).
\end{equation*}
Since $\Gamma(s+1) = s \Gamma(s)$, the previous inequality 
and \eqref{eq:fFunction} yield
\begin{equation}\label{eq:catspyjamaslabel1}
    |\Gamma(s)| = \frac{|\Gamma(s+1)|}{|s|}  
    \leq \frac{ \sqrt{2\pi} |s+1|^{\sigma + \frac{1}{2}} e^{-\pi|t|/2} }{|s|}\exp\bigg(\frac{1}{6|s+1|}\bigg)
    =f(s)e^{-\pi|t|/2} .
\end{equation}
For $0 < \delta \leq \tau$,
\eqref{eq:mu} and \eqref{eq:catspyjamaslabel1} imply that
\begin{equation*}\qquad
    |\Gamma(\sigma+it)|
    \leq \mu e^{-\pi|t|/2} \quad  \text{for $\sigma \in [-1+\delta ,  - \tfrac{1}{2}+\delta]$ and $t \in \R$}.
\end{equation*}
Since $|\Gamma(z)| = |\Gamma(\overline{z})|$ for $z \in \C$, we get
\begin{align*}
&\int_{-\infty}^{\infty}|\Gamma(\sigma+it)|  \log\big( C_3 N^{\frac{1}{d}} (|t|+4 ) \big) \,dt \\
&\qquad= 2\int_{0}^{\infty}|\Gamma(\sigma+it)|  \log\big( C_3 N^{\frac{1}{d}} \big(t+4 \big) \big) \,dt \\
&\qquad\leq  2 \mu \int_0^{\infty} e^{-\pi t/2} \log\big( C_3 N^{\frac{1}{d}} \big(t+4 \big) \big)  \,dt \\
&\qquad\leq  2 \mu \int_0^{\infty} e^{-\pi t/2} \left[\log C_3 + \frac{1}{d} \log N +  \log\left(t+4\right) \right]  \,dt \\
&\qquad\leq  2 \mu\left( \frac{2}{\pi}\log C_3  + \frac{2}{\pi}\cdot\frac{\log N}{d}
+1\right) \\
&\qquad\leq  \frac{4 \mu}{\pi} \left( \frac{\log N}{d} + \log C_3+\frac{\pi}{2} \right) \\
&\qquad = C_4 \frac{\log N}{d} +C_5.\qedhere
\end{align*}
\end{proof}

\begin{figure}
\centering
\begin{subfigure}[t]{0.475\textwidth}
\centering
\includegraphics[width=1\textwidth]{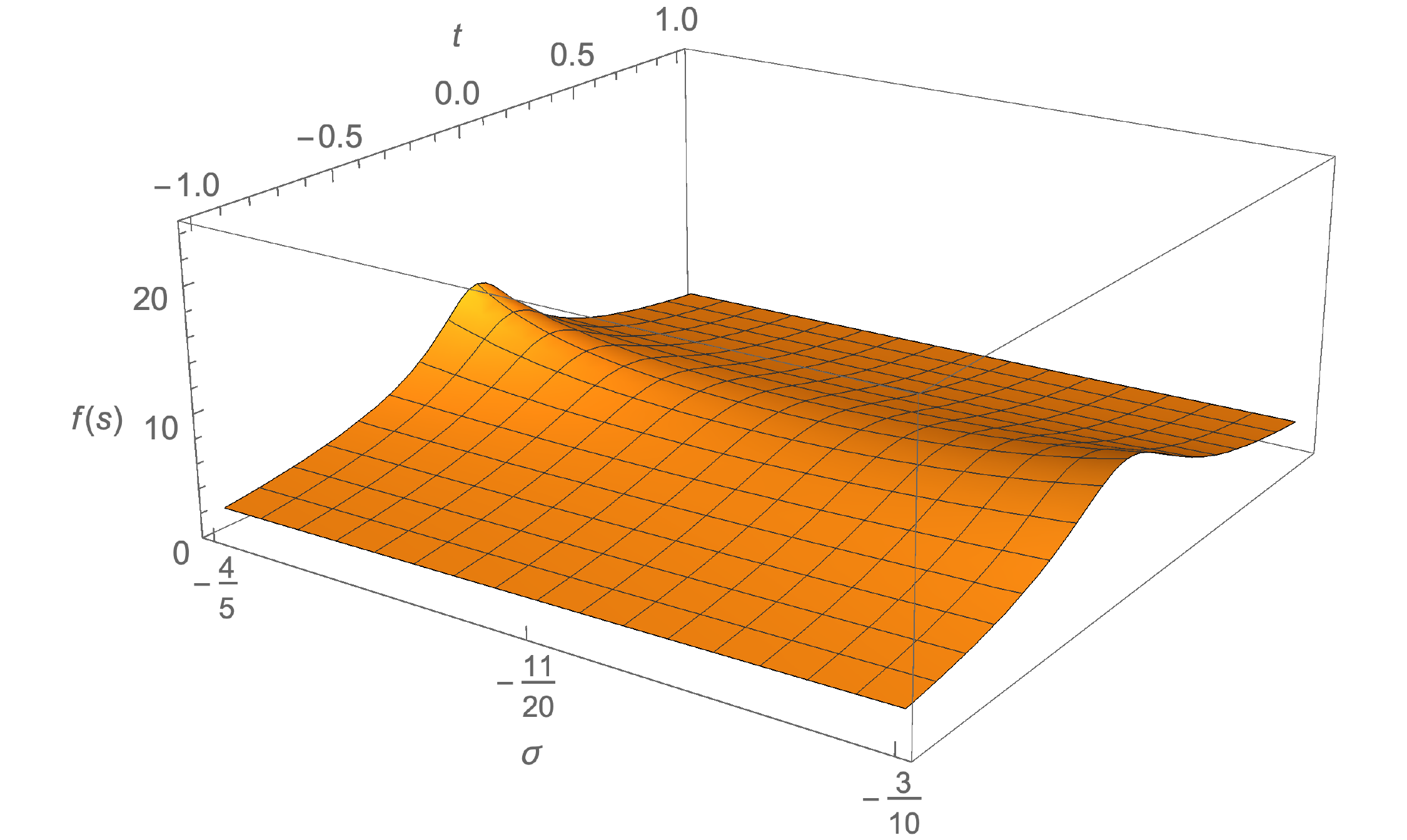}
\caption{$\delta = \frac{1}{5}$}
\label{Figure:F1}
\end{subfigure}
\quad
\begin{subfigure}[t]{0.475\textwidth}
\centering
\includegraphics[width=1\textwidth]{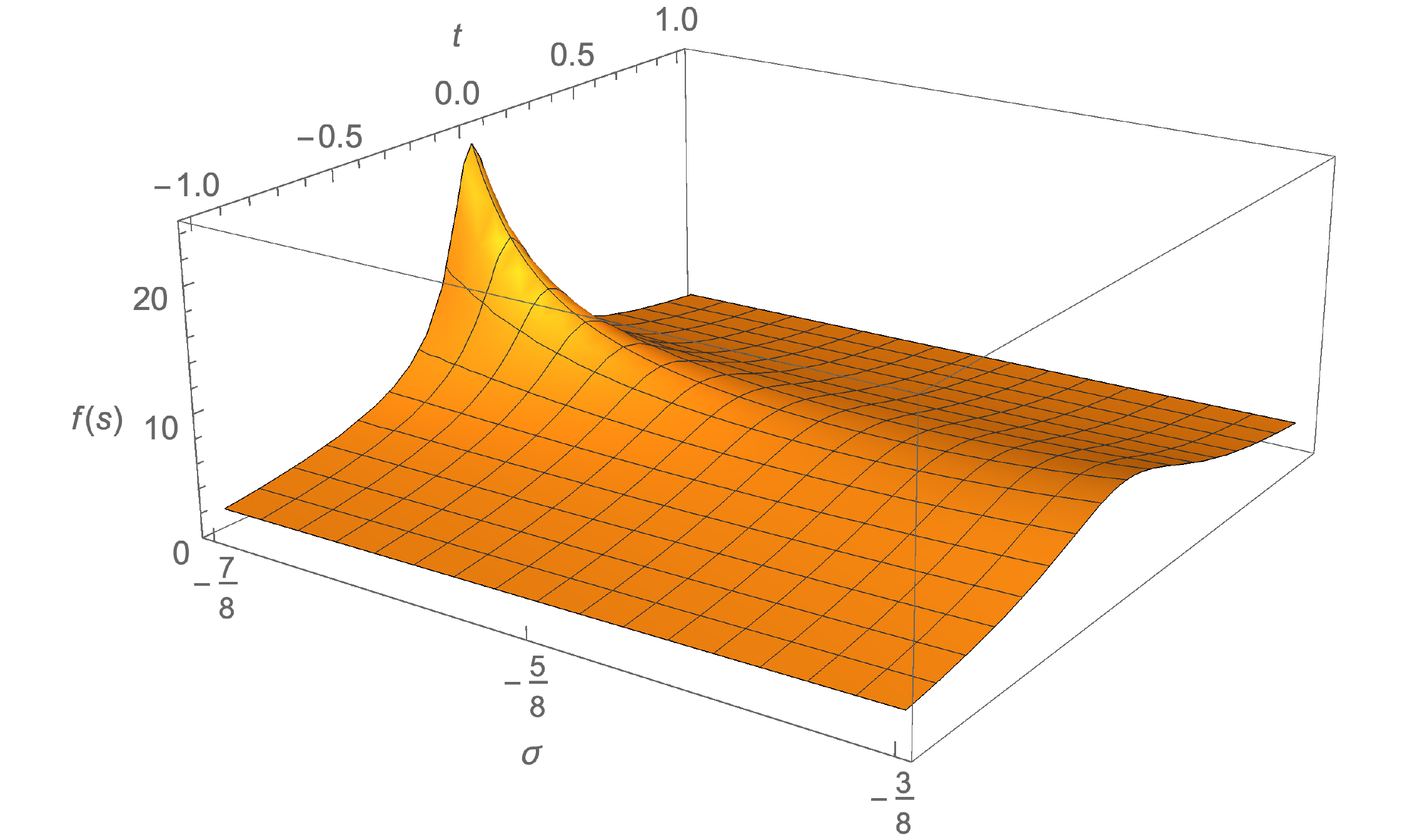}
\caption{$\delta = \frac{1}{8}$}
\label{Figure:F2}
\end{subfigure}
\caption{For $0 < \delta \leq \tau =0.219733068786773\ldots$, 
the nonnegative, real-valued function $f(s)$ attains its maximum value in the vertical strip
$\sigma \in [-1 + \delta , -\frac{1}{2}+\delta]$ at $s = -1+\delta$.}
\label{Figure:Surface}
\end{figure}

\subsection{Approximating the logarithmic derivative}\label{Subsection:Approx}
The next lemma is an explicit version of \cite[Lem.~4]{Duke}, which relates the logarithmic derivative of an
entire Artin $L$-function to a sum over the primes.

\begin{lemma}\label{Lemma:LogChiEstimate}
Let $L(s,\chi)$ be an entire Artin $L$-function that satisfies GRH, in which 
$\chi$ has degree $d$ and conductor $N$.  
For $1 \leq u \leq \frac{3}{2}$ and $x>1$,\label{p:u}
\begin{equation*}
    \bigg| \sum_p (\log p) \chi(p) p^{-u} e^{-p/x} +  \frac{L'(u,\chi)}{L(u,\chi)} \bigg|
    \,\leq\,  \frac{C_2 d x^{\frac{1}{2}+\delta-u}}{2\pi} \bigg(  \frac{C_4 \log N}{d} + C_5 \bigg)  + 0.76d .
\end{equation*}
\end{lemma}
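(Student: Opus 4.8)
The plan is to realize the exponentially weighted prime sum as a contour integral of $-L'/L$ against a Mellin kernel and then move the contour leftward past the pole at $u$. Starting point: for $x>0$ and $c>1$, the classical identity $\frac{1}{2\pi i}\int_{(c)} \Gamma(w) x^w\,dw = e^{-1/x}$ (more precisely its scaled version) gives, upon expanding $-L'(s,\chi)/L(s,\chi) = \sum_p (\log p)\sum_{m\ge 1}\chi(p^m)p^{-ms}$ as in \eqref{eq:ArtinLog}, the representation
\begin{equation*}
\sum_p (\log p)\,\chi(p)\,p^{-u}\,e^{-p/x}
= \frac{1}{2\pi i}\int_{(c)} \Gamma(w)\,x^{w}\,\Big(-\frac{L'(u+w,\chi)}{L(u+w,\chi)}\Big)\,dw + (\text{prime-power tail}),
\end{equation*}
where $c$ is chosen so that $u+c>1$ (any $c$ slightly bigger than $1$ works since $u\le 3/2$). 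The prime-power tail — the contribution of $m\ge 2$ — is absolutely convergent and bounded by a small explicit constant times $d$ using $|\chi(p^m)|\le d$ and $\sum_p (\log p)\sum_{m\ge 2} p^{-mu}\le \sum_p (\log p)\,p^{-2u}(1-p^{-1})^{-1}$, which for $u\ge 1$ is at most a numerical constant; this is where the $0.76d$ is meant to come from (after also absorbing a boundary-term contribution).

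Next I would shift the contour from $\Re w = c$ to $\Re w = \tfrac12+\delta-u$, which is negative since $u\ge 1>\tfrac12+\delta$. Crossing $\Re w = 0$ we pick up the residue of $\Gamma(w)$ at $w=0$, namely $x^0\cdot(-L'(u,\chi)/L(u,\chi))$, which is exactly the term $L'(u,\chi)/L(u,\chi)$ appearing (with the opposite sign) inside the absolute value in the statement — so after moving it to the other side we are left estimating the shifted integral. Along the new line $w = \tfrac12+\delta-u+it$ we have $\Re(u+w) = \tfrac12+\delta$, so Lemma~\ref{Lemma:LL384} applies and gives
\begin{equation*}
\Big|\frac{L'(u+w,\chi)}{L(u+w,\chi)}\Big| \le C_2 d\,\log\!\big(C_3 N^{1/d}(|t|+4)\big);
\end{equation*}
here one must check that the strip $\tfrac12+\delta\le\sigma\le 2\eta-\tfrac12-\delta$ genuinely contains $\sigma=\tfrac12+\delta$, which it does, and that no zeros of $L$ (equivalently no poles of $L'/L$) lie between the two contours — this is exactly where GRH and entirety are used, since all zeros have $\Re s = \tfrac12 < \tfrac12+\delta$. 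The factor $x^w$ contributes $x^{1/2+\delta-u}$ in modulus, and $\Gamma(w) = \Gamma(\tfrac12+\delta-u+it)$ is handled by Lemma~\ref{Lemma:GammaLog} with $\sigma = \tfrac12+\delta-u \in [-1+\delta,-\tfrac12+\delta]$ (using $1\le u\le\tfrac32$), which bounds $\int_{-\infty}^\infty |\Gamma(\sigma+it)|\log(C_3 N^{1/d}(|t|+4))\,dt$ by $\tfrac{C_4\log N}{d}+C_5$. Combining, the shifted integral is at most $\tfrac{C_2 d\,x^{1/2+\delta-u}}{2\pi}\big(\tfrac{C_4\log N}{d}+C_5\big)$, which is the first term on the right-hand side.

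The main obstacle is making the contour shift rigorous with explicit control of everything: one must justify interchanging the sum over $p$ with the integral (absolute convergence on the initial line), verify the decay of $\Gamma(w)x^w (L'/L)(u+w)$ as $|\Im w|\to\infty$ uniformly in $\Re w$ across the strip so that the horizontal segments of the rectangle vanish (the super-exponential decay $e^{-\pi|t|/2}$ of $\Gamma$ from \eqref{eq:catspyjamaslabel1} beats the at-most-polynomial growth of $L'/L$ from Lemma~\ref{Lemma:LL384}, but this needs $\eta\le 3/2$ so that the Lemma~\ref{Lemma:LL384} strip is wide enough to contain a full neighborhood of the line $\Re w=0$), and finally to bookkeep the prime-power tail and any residual boundary contributions so that their total is genuinely $\le 0.76d$. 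The constant $0.76$ presumably emerges from optimizing a numerical bound on $\sum_p(\log p)\,p^{-2}(1-1/p)^{-1}$ together with the Mellin-kernel positivity estimate $0\le e^{-1/x}\le 1$; I would isolate that computation at the end once the analytic structure is in place.
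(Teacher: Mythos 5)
Your proposal follows essentially the same route as the paper: the Cahen--Mellin kernel, interchange of sum and integral, a contour shift to $\Re = \tfrac12+\delta-u$ collecting the residue $L'(u,\chi)/L(u,\chi)$ from the pole of $\Gamma$ at $0$, and the bound on the shifted integral via Lemmas \ref{Lemma:LL384} and \ref{Lemma:GammaLog}. The only cosmetic differences are that the paper keeps all prime powers ($m\ge 1$) inside the integral representation and subtracts the $m\ge 2$ tail at the very end, bounding it by $d\sum_p \tfrac{\log p}{p(p-1)} = (0.7553\ldots)d < 0.76d$, so the constant $0.76$ is purely this tail and there is no boundary-term contribution to absorb.
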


\begin{proof}
For $\Re s > 1$, the derivative of \eqref{eq:ArtinLog} provides
\begin{equation}\label{eq:LogLLSum}
\sum_p \log p \sum_{m=1}^{\infty} \chi(p^m) p^{-ms}
=-\frac{L'(s,\chi)}{L(s,\chi)} .
\end{equation}
Substitute $y = p^m/x$ in the Cahen--Mellin integral (see \cite{HardyIntegral} or \cite[6.6.2, p.~380]{MurtyAnalytic})
\begin{equation*}\qquad
    e^{-y} = \frac{1}{2\pi i} \int_{\eta-i\infty}^{\eta+i\infty} y^{-s} \Gamma(s)\,ds
\qquad\text{for $y>0$},
\end{equation*}
and obtain 
\begin{equation*}\qquad
    e^{-p^m/x}  = \frac{1}{2\pi i} \int_{\eta-i\infty}^{\eta+i\infty} p^{-ms} x^s\Gamma(s)\,ds
\qquad \text{for $x>0$}.
\end{equation*}
For $1 \leq u \leq \frac{3}{2}$ and $x>1$, it follows from \eqref{eq:LogLLSum} that
\begin{align*}
&\sum_p \log p \sum_{m=1}^{\infty} \chi(p^m) p^{-mu} e^{-p^m/x} \\
&\qquad = \sum_p \log p \sum_{m=1}^{\infty} \chi(p^m) p^{-mu} 
\bigg(\frac{1}{2\pi i }\int_{\eta-i\infty}^{\eta+i\infty} p^{-ms} x^s\Gamma(s)\,ds \bigg)\\
&\qquad = \frac{1}{2\pi i }\int_{\eta-i\infty}^{\eta+i\infty} \bigg( \sum_p \log p \sum_{m=1}^{\infty} \chi(p^m) p^{-m(s+u)} \bigg)
x^s\Gamma(s)\,ds \\
&\qquad =- \frac{1}{2\pi i }\int_{\eta-i\infty}^{\eta+i\infty} \frac{L'(s+u,\chi)}{L(s+u,\chi)} x^s\Gamma(s)\,ds.
\end{align*}
Shift the integration contour to the vertical line $\Re s = \frac{1}{2}+\delta-u$.  Since $\Gamma$ has a simple pole with residue $1$ at $s=0$ and $\frac{1}{2}+\delta - u < 0$, we pick up the residue
\begin{equation*}
\underset{s=0}{\operatorname{Res}}\bigg(\frac{L'(s+u,\chi)}{L(s+u,\chi)} x^s\Gamma(s)\bigg) = \frac{L'(u,\chi)}{L(u,\chi)}  
\end{equation*}
and obtain
\begin{equation}\label{eq:DukeFundamental}\small
\sum_p \log p \sum_{m=1}^{\infty} \frac{ \chi(p^m) }{ p^{mu} e^{p^m/x} }
 +  \frac{L'(u,\chi)}{L(u,\chi)}  
= - \frac{1}{2\pi i }\int_{\frac{1}{2}+\delta-u-i\infty}^{\frac{1}{2}+\delta-u+i\infty} \frac{L'(s+u,\chi)}{L(s+u,\chi)} x^s\Gamma(s)\,ds.
\end{equation}
Since $\frac{1}{2}+\delta-u \in [-1 + \delta, -\frac{1}{2}+\delta]$,
we estimate the integral on the right-hand side of \eqref{eq:DukeFundamental} 
with Lemmas \ref{Lemma:LL384} and \ref{Lemma:GammaLog}:
\begin{align*}
&\left|\int_{\frac{1}{2}+\delta-u-i\infty}^{\frac{1}{2}+\delta-u+i\infty} \frac{L'(s+u,\chi)}{L(s+u,\chi)} x^s\Gamma(s)\,ds \right| \\
&\qquad\qquad\leq \int_{-\infty}^{\infty} \bigg| \frac{L'(\frac{1}{2}+\delta+it,\chi)}{L(\frac{1}{2}+\delta+it,\chi)}\bigg| 
x^{\frac{1}{2}+\delta-u} |\Gamma(\tfrac{1}{2}+\delta-u+it)| dt \\
&\qquad\qquad\leq C_2 d  x^{\frac{1}{2}+\delta-u}  \int_{-\infty}^{\infty}|\Gamma(\tfrac{1}{2}+\delta-u+it)| \log\big(C_3 N^{\frac{1}{d}}(|t|+4)\big) dt \\
&\qquad\qquad\leq C_2 d x^{\frac{1}{2}+\delta-u} \bigg(  \frac{C_4 \log N}{d} + C_5 \bigg).
\end{align*}
The triangle inequality, \eqref{eq:DukeFundamental}, and the preceding inequality imply that
\begin{align*}
    &\left| \sum_p \log p \frac{ \chi(p) }{ p^{u} e^{p/x} } +  \frac{L'(u,\chi)}{L(u,\chi)} \right|
    \nonumber\\
    &\hspace{1.5cm}\leq \left| \sum_p \log p \sum_{m=1}^{\infty} \frac{ \chi(p^m) }{ p^{mu} e^{p^m/x} } +  \frac{L'(u,\chi)}{L(u,\chi)} \right|
    + \left| \sum_p \log p \sum_{m=2}^{\infty} \frac{ \chi(p^m) }{ p^{mu} e^{p^m/x} }\right|\nonumber\\
    &\hspace{1.5cm}\leq \frac{ C_2 d x^{\frac{1}{2}+\delta-u} }{2\pi} \bigg(  \frac{C_4 \log N}{d} + C_5 \bigg) + \left| \sum_p \log p \sum_{m=2}^{\infty} \frac{ \chi(p^m) }{ p^{mu} e^{p^m/x} }\right|.
\end{align*}

We bound the sum over $m \geq 2$ above using $| \chi(p^m) | \leq d$, the inequality $1 \leq u \leq \frac{3}{2}$, and numerical summation:
\begin{align*}
\bigg|\sum_p \log p \sum_{m=2}^{\infty} \frac{ \chi(p^m) }{ p^{mu} e^{p^m/x} } \bigg|
&\,\leq\, d \sum_p \log p \sum_{m=2}^{\infty} p^{-mu}  
\,\leq\, d \sum_p \frac{\log p}{p^u(p^u-1)}  \\
& \,\leq\, d \sum_p \frac{\log p}{p(p-1)} = (0.7553666\ldots )d
\,<\, 0.76d.\qedhere
\end{align*}
\end{proof}

\subsection{An exponentially weighted sum}\label{Subsection:Weighted}

The next step in the proof of Theorem \ref{Theorem:Duke} is 
an explicit version of the argument at the top of \cite[p.~113]{Duke}.  
In what follows, $f(t) = O^{\star}(g(t))$ means that $|f(t)| \leq |g(t)|$ for all 
$t$ under consideration.  That is, $O^{\star}$ is like Landau's Big-$O$ notation, except that the implicit constant is always $1$.

\begin{lemma}\label{Lemma:L1N}
Let $L(s,\chi)$ be an entire Artin $L$-function that satisfies GRH, in which  $\chi$ has degree $d$ and conductor $N$.  
For $x>1$,
\begin{equation*}
\Bigg| \log L(1,\chi) - \sum_p \chi(p) p^{-1}e^{-p/x} \Bigg|
\, \leq \,  2.19d +  \frac{ C_6 \log N + C_7 d}{x^{\frac{1}{2}-\delta} \log x},
\end{equation*}
in which
\begin{equation}\label{eq:C6-7}
C_6 := C_6(\delta,\eta)= \frac{C_2 C_4}{2\pi}
\qquad\text{and}\qquad
C_7 := C_7(\delta,\eta) 
=  \frac{C_2 C_5}{2 \pi}.
\end{equation}
\end{lemma}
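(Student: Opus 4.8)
The plan is to integrate the estimate of Lemma \ref{Lemma:LogChiEstimate} over the variable $u$. Because $L(s,\chi)$ satisfies GRH, $\log L(u,\chi)$ is analytic on $[1,\infty)$ (in particular $L(1,\chi)\neq 0$), and \eqref{eq:assembly2} gives $|\log L(u,\chi)| \leq d\log\zeta(u) \to 0$ as $u\to\infty$; the fundamental theorem of calculus then yields
\begin{equation*}
\log L(1,\chi) = -\int_1^{\infty}\frac{L'(u,\chi)}{L(u,\chi)}\,du .
\end{equation*}
On the prime side, the factor $e^{-p/x}$ together with $|\chi(p)|\leq d$ makes $\sum_p(\log p)|\chi(p)|p^{-u}e^{-p/x}$ converge, with finite integral $d\sum_p e^{-p/x}/p$ over $[1,\infty)$, so Tonelli's theorem permits
\begin{align*}
\int_1^{\infty}\sum_p(\log p)\chi(p)p^{-u}e^{-p/x}\,du
&= \sum_p \chi(p)e^{-p/x}\int_1^{\infty}(\log p)p^{-u}\,du \\
&= \sum_p \chi(p)p^{-1}e^{-p/x}.
\end{align*}
Subtracting, the quantity to be bounded is $\big|\int_1^{\infty}\big(\frac{L'(u,\chi)}{L(u,\chi)} + \sum_p(\log p)\chi(p)p^{-u}e^{-p/x}\big)\,du\big|$.

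I would then split the integral at $u=\tfrac32$, the endpoint of validity in Lemma \ref{Lemma:LogChiEstimate}. On $[1,\tfrac32]$ that lemma bounds the integrand in absolute value by $\tfrac{C_2 d\,x^{1/2+\delta-u}}{2\pi}\big(\tfrac{C_4\log N}{d}+C_5\big) + 0.76d$; since $\int_1^{3/2}x^{1/2+\delta-u}\,du = \tfrac{x^{\delta-1/2}-x^{\delta-1}}{\log x} \leq \tfrac{1}{x^{1/2-\delta}\log x}$ (using $x>1$ and $\delta<\tfrac12$) and $\int_1^{3/2}0.76d\,du = 0.38d$, this part contributes at most $\tfrac{C_6\log N + C_7 d}{x^{1/2-\delta}\log x} + 0.38d$, with $C_6,C_7$ precisely the constants in \eqref{eq:C6-7}.

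For the tail $[\tfrac32,\infty)$ I would bound the two summands separately and crudely, as there is no $N$- or $x$-dependence to track there. The same fundamental-theorem argument gives $\big|\int_{3/2}^{\infty}\tfrac{L'(u,\chi)}{L(u,\chi)}\,du\big| = |\log L(\tfrac32,\chi)| \leq d\log\zeta(\tfrac32)$ by \eqref{eq:assembly2}; and using $|\chi(p)|\leq d$, $e^{-p/x}\leq 1$, and termwise integration,
\begin{align*}
\bigg|\int_{3/2}^{\infty}\sum_p(\log p)\chi(p)p^{-u}e^{-p/x}\,du\bigg|
&\leq d\sum_p(\log p)\int_{3/2}^{\infty}p^{-u}\,du \\
&= d\sum_p p^{-3/2}.
\end{align*}
Collecting the three contributions, the coefficient of $d$ is $0.38 + \log\zeta(\tfrac32) + \sum_p p^{-3/2} = 0.38 + 0.9602\ldots + 0.8493\ldots < 2.19$, which gives the claimed inequality; the numerical fact $\log\zeta(\tfrac32) + \sum_p p^{-3/2} < 1.81$ would be confirmed in Mathematica and Python as elsewhere in the paper.

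The main obstacle is not a deep estimate but the rigor of the manipulations: one must justify the Tonelli interchange and the convergence of the improper integrals, invoke GRH to know $L(1,\chi)\neq 0$ and that $\log L$ is analytic along $[1,\infty)$ so the fundamental-theorem-of-calculus steps are valid, and check that the crude tail bound together with the $0.38d$ from $[1,\tfrac32]$ still lands under the advertised $2.19d$.
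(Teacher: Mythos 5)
Your proposal is correct and is essentially the paper's argument: the key step in both is to integrate the bound of Lemma \ref{Lemma:LogChiEstimate} in $u$ with splitting point $\tfrac{3}{2}$, producing the identical contributions $0.38d$, $d\log\zeta(\tfrac32)$, and $d\sum_p p^{-3/2}=dP(\tfrac32)$, whence the same $2.19d$. The only (cosmetic) difference is that the paper integrates only over $[1,\tfrac32]$ and bounds the leftover terms $\log L(\tfrac32,\chi)$ and $\sum_p\chi(p)p^{-3/2}e^{-p/x}$, whereas you integrate to $\infty$ and bound the corresponding tail integrals, which are the same quantities in disguise.
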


\begin{proof}
Fix $x>1$.
We begin by integrating the expressions in the inequality from Lemma \ref{Lemma:LogChiEstimate} over $u \in [1,\frac{3}{2}]$ and obtain
\begin{align*}
    \int_1^{\frac{3}{2}}\sum_p (\log p) \chi(p) p^{-u} e^{-p/x}\,du
    &= \sum_p (\log p)\chi(p) e^{-p/x} \int_1^{\frac{3}{2}} p^{-u} \,du \\
    &= \sum_p \chi(p) e^{-p/x}   \frac{\sqrt{p}-1}{p^{3/2} }\\
    &= \sum_p \chi(p) p^{-1}e^{-p/x} -\sum_p \chi(p) p^{-3/2}e^{-p/x} \\
    &= \sum_p \chi(p) p^{-1}e^{-p/x} +O^{\star}\bigg(d \sum_p \frac{1}{p^{3/2}}\bigg) \\
    &= \sum_p \chi(p) p^{-1}e^{-p/x} + d P(\tfrac{3}{2})O^{\star}(1),
\end{align*}
in which $P(x)$ denotes the prime zeta function.
The initial interchange of integral and summation is permissible by uniform convergence.
Using \eqref{eq:assembly2}, we obtain
\begin{equation*}
    \int_1^{\frac{3}{2}} \frac{L'(u,\chi)}{L(u,\chi)} \,du
    = \log L(\tfrac{3}{2},\chi) - \log L(1,\chi) 
    = - \log L(1,\chi) + d\big(\log \zeta(\tfrac{3}{2})\big)O^{\star}( 1).
\end{equation*}

Lemma \ref{Lemma:LogChiEstimate} says that
\begin{equation*}\small
    \sum_p (\log{p}) \chi(p) p^{-u} e^{-p/x}
    = - \frac{L'(u,\chi)}{L(u,\chi)}
     + O^*\left(\frac{C_2 d x^{\frac{1}{2}+\delta-u}}{2\pi} \bigg(  \frac{C_4 \log N}{d} + C_5 \bigg)  + 0.76d\right).
\end{equation*}
Integrate this over $u\in [1,\frac{3}{2}]$ and observe that
\begin{align*}
    &\int_1^{3/2}\bigg[ \frac{C_2 d x^{\frac{1}{2}+\delta-u}}{2\pi} \bigg(  \frac{C_4 \log N}{d} + C_5 \bigg)  + 0.76d\bigg]\,du \\
    &\qquad\qquad= \frac{C_2 (\sqrt{x}-1)  (C_4 \log N+C_5 d)}{2 \pi x^{1-\delta}\log x}+ 0.38d \\
    &\qquad\qquad\leq \frac{C_2   (C_4 \log N+C_5 d)}{2 \pi x^{\frac{1}{2}-\delta}\log x}+0.38d \\
    &\qquad\qquad=\frac{ C_6 \log N + C_7 d}{x^{\frac{1}{2}-\delta} \log x} + 0.38d,
\end{align*}
to deduce that
\begin{align*}
    & \sum_p \chi(p) p^{-1}e^{-p/x} + d P(\tfrac{3}{2})O^{\star}(1) 
     = \int_1^{\frac{3}{2}}\sum_p (\log p) \chi(p) p^{-u} e^{-p/x}\,du\\
    &\qquad \quad= \log L(1,\chi) + d\big(\log \zeta(\tfrac{3}{2})\big)O^{\star}( 1) + O^*\left( \frac{ C_6 \log N + C_7 d}{x^{\frac{1}{2}-\delta} \log x} + 0.38d\right).
\end{align*}
Since $P(\frac{3}{2}) < 0.849567$
and $\log \zeta(\tfrac{3}{2}) < 0.96026$, we obtain the desired result.
\end{proof}

\subsection{Approximating exponentials}\label{Subsection:Exponential}
The sum that appears in the previous lemma involves the expression $e^{-p/x}$. Since $e^{-p/x} = 1 + O(1/x)$, one hopes to replace $e^{-p/x}$ with $1$ if $p/x$ is sufficiently small.
In what follows, let $\beta > \frac{1}{2}$\label{p:Beta} and define\label{p:xy}
\begin{equation}\label{eq:xy}
x = (\log N)^{\beta} \qquad \text{and} \qquad y = (\log N)^{1/2}.
\end{equation}

\begin{lemma}\label{Lemma:Nxy}
Let $L(s,\chi)$ be an entire Artin $L$-function that satisfies GRH, in which 
$\chi$ has degree $d$ and conductor $N$.  Then
\begin{equation*}
\Bigg|\sum_p \chi(p) p^{-1}e^{-p/x} - \sum_{p\leq y} \chi(p) p^{-1} \Bigg|  
\, <  \, d \log{4\beta} + d\, F (x,y),
\end{equation*}
where 
\begin{equation}\label{eq:F}
F(x,y) := \frac{y}{x}
+ m(x^2) + m( y)
+\frac{4}{xe^x}
\end{equation}
tends to zero as $N \to \infty$ for each fixed $\beta > \frac{1}{2}$.  Here $m(\cdot)$ denotes
the function \eqref{eq:mFunction}.
\end{lemma}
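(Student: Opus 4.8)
The plan is to follow the argument at the top of \cite[p.~113]{Duke}, but tracking every constant. Recall from \eqref{eq:xy} that $x=(\log N)^{\beta}$ and $y=(\log N)^{1/2}$ with $\beta>\tfrac12$. Since $N\geq 3$ gives $\log N>1$, we have $1<y<x<x^{2}$; moreover $y\geq(\log 3)^{1/2}>1.048$ and $x^{2}\geq(\log 3)^{2\beta}>\log 3>1.048$, so Lemma \ref{Lemma:Mertens} applies at both $y$ and $x^{2}$. Using $|\chi(p)|\leq d$ throughout, the starting point is the decomposition
\begin{align*}
\sum_p \frac{\chi(p)}{p}e^{-p/x}-\sum_{p\leq y}\frac{\chi(p)}{p}
&=\sum_{p\leq y}\frac{\chi(p)}{p}\big(e^{-p/x}-1\big)\\
&\quad+\sum_{y<p\leq x^{2}}\frac{\chi(p)}{p}e^{-p/x}
+\sum_{p> x^{2}}\frac{\chi(p)}{p}e^{-p/x},
\end{align*}
each series converging absolutely; I bound the three pieces on the right in turn.

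For the first piece, $0\leq 1-e^{-t}\leq t$ gives $|e^{-p/x}-1|\leq p/x$, so it is at most $\tfrac{d}{x}\sum_{p\leq y}1=\tfrac{d\,\pi(y)}{x}\leq\tfrac{dy}{x}$. For the second piece I use $e^{-p/x}\leq 1$ and apply Lemma \ref{Lemma:Mertens} twice:
\begin{equation*}
\sum_{y<p\leq x^{2}}\frac1p
=\sum_{p\leq x^{2}}\frac1p-\sum_{p\leq y}\frac1p
<\big(\log\log x^{2}+M+m(x^{2})\big)-\big(\log\log y+M-m(y)\big)
=\log\frac{\log x^{2}}{\log y}+m(x^{2})+m(y),
\end{equation*}
and the crucial point is that $\log x^{2}=2\beta\log\log N$ while $\log y=\tfrac12\log\log N$, so the ratio is exactly $4\beta$; hence the second piece is at most $d\big(\log 4\beta+m(x^{2})+m(y)\big)$. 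For the third piece, $p>x^{2}$ gives $\tfrac1p<\tfrac1{x^{2}}$, and summing a geometric series over integers $n\geq\lceil x^{2}\rceil$, together with $1-e^{-1/x}\geq\tfrac{1}{x+1}$ (equivalent to $e^{1/x}\geq 1+\tfrac1x$), gives
\begin{equation*}
\sum_{p> x^{2}}\frac{e^{-p/x}}{p}
\leq\frac{1}{x^{2}}\cdot\frac{e^{-\lceil x^{2}\rceil/x}}{1-e^{-1/x}}
\leq\frac{(x+1)e^{-x}}{x^{2}}\leq\frac{2}{xe^{x}}
\end{equation*}
using $x\geq 1$ at the last step, so the third piece is at most $2d/(xe^{x})<4d/(xe^{x})$.

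Adding the three bounds yields $d\log 4\beta+d\big(\tfrac yx+m(x^{2})+m(y)+\tfrac{4}{xe^{x}}\big)=d\log 4\beta+dF(x,y)$, with the strict inequality inherited from Lemma \ref{Lemma:Mertens}. Finally $F(x,y)\to 0$ as $N\to\infty$ for fixed $\beta>\tfrac12$: $\tfrac yx=(\log N)^{1/2-\beta}\to 0$, both $m(x^{2})$ and $m(y)\to 0$ since $x,y\to\infty$ and $m(t)\to 0$ by \eqref{eq:mFunction}, and $4/(xe^{x})\to 0$. The only delicate points are that Lemma \ref{Lemma:Mertens}'s range hypothesis is met at $y$ — which is exactly why its threshold is $1.048$ — and that the two Mertens main terms combine to give precisely $\log 4\beta$; the geometric tail bound over $p>x^{2}$ is the other careful-but-routine estimate, and I anticipate no serious obstacle.
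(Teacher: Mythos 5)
Your proof is correct and follows essentially the same route as the paper: the identical three-part decomposition (primes $p\leq y$, $y<p\leq x^{2}$, $p>x^{2}$), the bound $|e^{-p/x}-1|\leq p/x$ for the first piece, and Lemma \ref{Lemma:Mertens} applied at $y$ and $x^{2}$ so the main terms combine to $\log 4\beta$. The only difference is cosmetic: for the tail $p>x^{2}$ you use $1-e^{-1/x}\geq 1/(x+1)$ to get $2/(xe^{x})$, whereas the paper uses $\tfrac14|t|<|e^{t}-1|$ to get $4/(xe^{x})$; your bound is slightly sharper and you correctly relax it to match $F(x,y)$.
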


\begin{proof}
First observe that
\begin{align*}
\sum_p \chi(p) p^{-1}e^{-p/x} 
&= \sum_{p \leq y} \chi(p) p^{-1} + \underbrace{\sum_{p\leq y} \chi(p) p^{-1}(e^{-p/x}-1)}_{I_1}  \\
&\qquad\quad+ \underbrace{\sum_{y<p\leq x^2} \chi(p) p^{-1}e^{-p/x} }_{I_2}
+ \underbrace{\sum_{x^2<p} \chi(p) p^{-1}e^{-p/x} }_{I_3}.
\end{align*}
We estimate the summands $I_1$, $I_2$, and $I_3$ separately.
\medskip

\noindent\textsc{Bounding $I_1$.}
Since $p \leq y < x$, we may use $t=-p/x$ in the inequality 
\begin{equation}\label{eq:assemblyuno}
    |e^t - 1 | \leq |t| \quad \text{for $-1<t\leq 0$},
\end{equation}
which follows since the series $e^t-1 = \sum_{n=1}^{\infty} t^n/n!$ is alternating for such $t$.
Thus,
\begin{align*}
    |I_1| 
    &= \Big| \sum_{p\leq y} \chi(p) p^{-1}(e^{-p/x}-1) \Big| \\
    &\leq\, \sum_{p\leq y} |\chi(p)| \frac{|e^{-p/x}-1|}{p} \\
    &\leq\, d \sum_{p\leq y} \frac{1}{p} \cdot \frac{p}{x} && \text{(by \eqref{eq:assemblyuno})}\\
    &\leq \frac{d}{x} \pi(y) \\
    &\leq \frac{dy}{x} .
\end{align*}
Although this bound can be improved asymptotically, the improvement is negligible for the range of parameter values considered;
see Remark \ref{Remark:Rosser} below.
\medskip

\noindent\textsc{Bounding $I_2$.}
For $t \geq (\log 3)^{1/2} > 1.048$, recall 
that the bound \eqref{eq:SuperRosser} from Lemma \ref{Lemma:Mertens} applies.
Since $m(t)$ is decreasing,
\begin{align*}
    |I_2|
    &= \bigg| \sum_{y<p\leq x^2} \chi(p) p^{-1}e^{-p/x} \bigg| 
    \leq d \sum_{y<p\leq x^2} p^{-1}e^{-p/x} 
    \leq d \sum_{y<p\leq x^2} p^{-1} \\
    &\leq d \bigg( \sum_{p\leq x^2} p^{-1} - \sum_{p\leq y} p^{-1}\bigg)\\
    &\leq d \left[ \big(\log \log (x^2) + M + O^{\star}(m(x^2)) \big) - \big( \log \log y + M + O^{\star}(m(y))\big)  \right] \\
    &\leq d \bigg( \log \bigg( \frac{\log (x^2)}{\log y} \bigg) +  m(x^2) +m(y) \bigg)\\
    &= d  \bigg(  \log \bigg( \frac{2\beta\log \log N }{\frac{1}{2} \log \log N }\bigg)+  m(x^2) +m(y) \bigg)\\
    &= d  \big(  \log 4 \beta +  m(x^2) +m(y) \big).
\end{align*}

\noindent\textsc{Bounding $I_3$.}
Observe that
\begin{align*}
    | I_3 |
    &= \Big| \sum_{x^2<p} \chi(p) p^{-1}e^{-p/x}  \Big| 
    \leq d \sum_{x^2<p}  p^{-1}e^{-p/x} \\
    &\leq \frac{d}{x^2} \sum_{x^2<p} e^{-p/x} 
    \leq \frac{d}{x^2} \sum_{k = \lfloor x^2 \rfloor}^\infty (e^{-1/x})^k \\
    &=   \frac{d}{x^2} \frac{(e^{-1/x})^{\lfloor x^2 \rfloor }}{1 - e^{-1/x}} 
    \leq   \frac{d(e^{-1/x})^{x^2  }}{x^2(1 - e^{-1/x})} \\
    &\leq   \frac{de^{-x}}{x^2(1 - e^{-1/x})} 
    \leq \frac{4d}{x e^x}.
\end{align*}
The last inequality follows from the fact that $\frac{1}{4}|t| < |e^t - 1|$ for $0 < |t| < 1$.\footnote{Since $e^{-x}$ and $1 - e^{-1/x}$ tend to zero, computing their quotient leads to numerical issues.  Thus,
the final simplifying estimate is needed to ensure the numerical stability of later computations.}%
\medskip

Putting this all together yields the desired result.
\end{proof}

\begin{remark}\label{Remark:Rosser}
For $t > 1$, it is known that $\pi(t) < 1.25506 t / \log t$ \cite[Cor.~1, (3.5)]{Rosser}.
One can asymptotically improve the estimate in the proof of Lemma \ref{Lemma:Nxy} and get
\begin{equation*}
| I_1| \leq  \frac{d}{x} \pi(y) \leq \frac{1.25506y}{x \log y}d .
\end{equation*}
For the values of $\beta$ that arise in the numerical optimization step (Subsection \ref{Subsection:DukeComplete}), both estimates
yield essentially the same final result.  Therefore, we keep the original bound, which is a bit cleaner.
\end{remark}

\subsection{Completion of the proof of Theorem \ref{Theorem:Duke}}\label{Subsection:DukeComplete}
As defined in \eqref{eq:xy}, suppose that $x = (\log N)^{\beta}$ and $y = (\log N)^{1/2}$.
Using Lemmas \ref{Lemma:L1N} and \ref{Lemma:Nxy}, we find that 
\begin{equation}\label{eq:Gbound}
    \bigg|\log L(1,\chi) \,\,-  \!\!\!\!\!\!\sum_{p \leq (\log N)^{1/2}} \frac{\chi(p)}{p}  \bigg|
    \leq G(\beta,\delta,\eta;d,N)
\end{equation}
for all $\beta > \frac{1}{2} $, $\delta \in (0,\tau]$, and $\eta \in (1,\frac{3}{2})$,
where
\begin{align}
    G(\beta,\delta,\eta;d,N) &:=
    \underbrace{ 2.19d +  \frac{ C_6(\delta,\eta) \log N + C_7(\delta,\eta) d}{x^{\frac{1}{2}-\delta} \log x} }_{\text{from Lemma \ref{Lemma:L1N}}}
     + \underbrace{ d \log 4 \beta + d \, F(x,y) }_{\text{from Lemma \ref{Lemma:Nxy}}}\nonumber\\
     &=\underbrace{\frac{C_6(\delta,\eta) \log{N}}{x^{\frac{1}{2}-\delta} \log x}}_{A(\beta,\delta,\eta;N)}
     + \underbrace{\left(2.19 + \log 4 \beta + F(x,y)+  \frac{C_7(\delta,\eta) }{x^{\frac{1}{2}-\delta} \log x}\right)}_{B(\beta,\delta,\eta;N)} d \label{eq:G}
\end{align}
is an affine function of $d$.
It remains to optimize the constants $\eta$, $\delta$, and $\beta$.
Before we attempt this, let us make a few remarks about the qualitative nature of the expressions involved.
This informs our selection of the parameters below and justifies the particular search region that we eventually consider.

First observe that $F(x,y)$ contains the summand
\begin{equation*}
    m(y) =
    \frac{1}{\sqrt{y}} \left(\dfrac{3\log{y} + 4}{8\pi}\right)+ \frac{5}{y^2}
    \geq \frac{3 \log \log N }{16 \pi (\log N)^{1/4} }.
\end{equation*}
This error term comes from Lemma \ref{Lemma:Mertens}, which is an asymptotically sharp form of Mertens' second theorem
under the Riemann Hypothesis.  Thus, we cannot reasonably expect the error bound in \eqref{eq:Gbound}
to be better than $O((\log \log N) (\log N)^{-1/4})$.

Next note that $\log 4\beta$ tends to infinity slowly with $\beta$, and hence we can afford to 
make $\beta$ large if this reduces the other terms significantly.  In fact, the remark above suggests that we at least take
(recall the definition \eqref{eq:tau} of $\tau$ and that $\delta \leq \tau$)
\begin{equation*}\label{p:BetaBig}
    \beta \geq \frac{5}{2 -4 \tau} \approx 4.46003
\end{equation*}
so that 
\begin{equation*}
    \frac{ C_6 \log N}{x^{\frac{1}{2}-\delta} \log x}
    = \frac{ C_6 \log N}{ \beta (\log N)^{\beta(\frac{1}{2}-\delta)} \log \log N}
    = O\left( \frac{1}{ (\log N)^{\frac{1}{4}} \log \log N } \right)
\end{equation*}
is dominated by $m(y)$, which we already know cannot be much improved.

Aside from $m(y)$, a glance at \eqref{eq:F} shows that the other three summands that comprise $F(x,y)$ are of lower order in $N$. 
Moreover, increasing $\beta$ increases the rate at which these terms decay as $N\to\infty$.
Thus, we should take $\beta$ relatively large.

\begin{figure}
\centering
\includegraphics[width=0.6\textwidth]{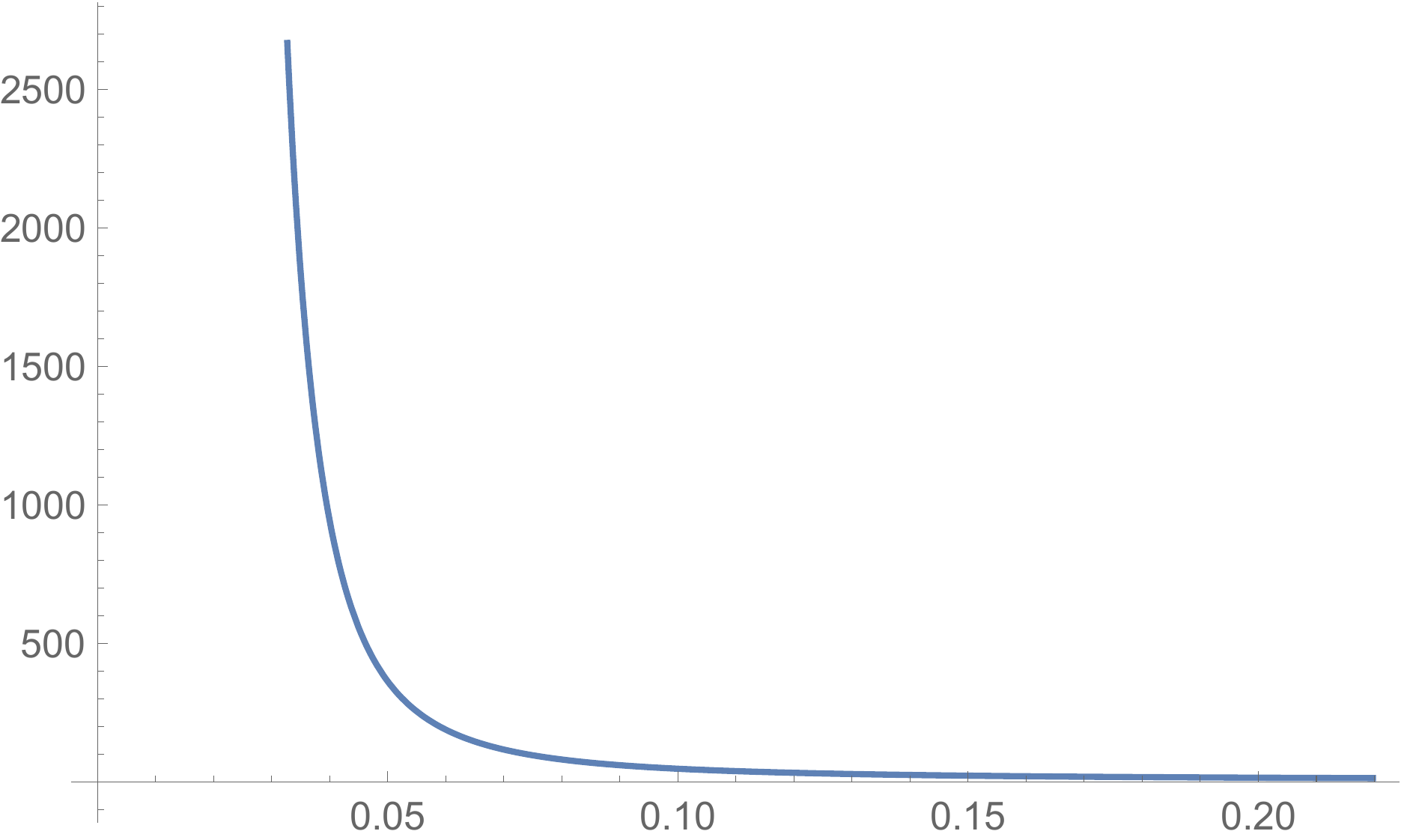}
\caption{Graph of $C_4(\delta)$ for $\delta \in (0,\tau]$.}
\label{Figure:C4}
\end{figure}

There is also the question about minimizing the constants $C_6(\delta,\eta)$ and $C_7(\delta,\eta)$,
which are defined in \eqref{eq:C6-7}.  This involves minimizing $C_2(\delta,\eta)$, $C_3(\eta)$, $C_4(\delta)$, and $C_5(\delta,\eta)$.
All of these tasks point in the same direction.
A look at the definitions \eqref{eq:C2-3} of $C_2(\delta,\eta)$ and \eqref{eq:C4} of $C_4(\delta)$
warns us to keep $\delta \in (0,\tau]$ away from zero.  In fact, $C_4(\delta)$ is minimized for $\delta \in (0,\tau]$ by selecting $\delta = \tau$; see Figure \ref{Figure:C4}.  
Thus, one expects that in the final optimization $\delta$ should be close to $\tau$.
The pole of the Riemann zeta function at $s=1$ and the definition \eqref{eq:C2-3} of $C_3(\eta)$ tell us that $\eta \in (1,\frac{3}{2}]$ must be kept away from $1$; this also ensures that $C_5(\delta,\eta)$ does not get out of hand.

For $\beta \geq 4.5$, all of the summands that comprise $G(\beta,\delta,\eta;d,N)$ are decreasing functions of $N$.  
Therefore, $G(\beta,\delta,\eta;d,N) \leq G(\beta,\delta,\eta;d,3)$ for $N \geq 3$ (recall that the smallest possible conductor for an entire Artin $L$-function is $3$; see the comments at the beginning of Subsection \ref{Subsection:DukeEstimate}).
Since $d \geq 1$ and
\begin{equation*}
    G(\beta,\delta,\eta;d,3) = A(\beta,\delta,\eta;3) + B(\beta,\delta,\eta;3) d \leq \big(A(\beta,\delta,\eta;3) + B(\beta,\delta,\eta;3)\big)d,
\end{equation*}
we minimize $A(\beta,\delta,\eta;3) + B(\beta,\delta,\eta;3)$ to obtain our final bound on \eqref{eq:Gbound}.

Numerical investigations with $\delta \approx \tau$ suggest that values of $\beta$ significantly larger than $200$ or smaller than $100$ hurt us;
see Figure \ref{Figure:Hurt}.  Thus, we settle on the domain
\begin{equation}\label{eq:SearchRegion}
(\beta,\delta,\eta) \in [ 4.5, 300] \times (0,\tau]\times (1,\tfrac{3}{2}]
\end{equation}
and rapidly obtain the (approximate) optimal point 
\begin{equation}\label{eq:Choice}
(\beta,\delta,\eta) = 
(155.648,\, 0.213503,\, 1.18818),
\end{equation}
which (approximately) minimizes the objective and yields
\begin{equation*}
\bigg|\log L(1,\chi) \,\,-  \!\!\!\!\!\!\sum_{p \leq (\log N)^{1/2}} \frac{\chi(p)}{p}  \bigg| 
 < 13.53d.
\end{equation*}
This completes the proof of Theorem \ref{Theorem:Duke}. \qed

\begin{figure}
\centering
\includegraphics[width=0.475\textwidth]{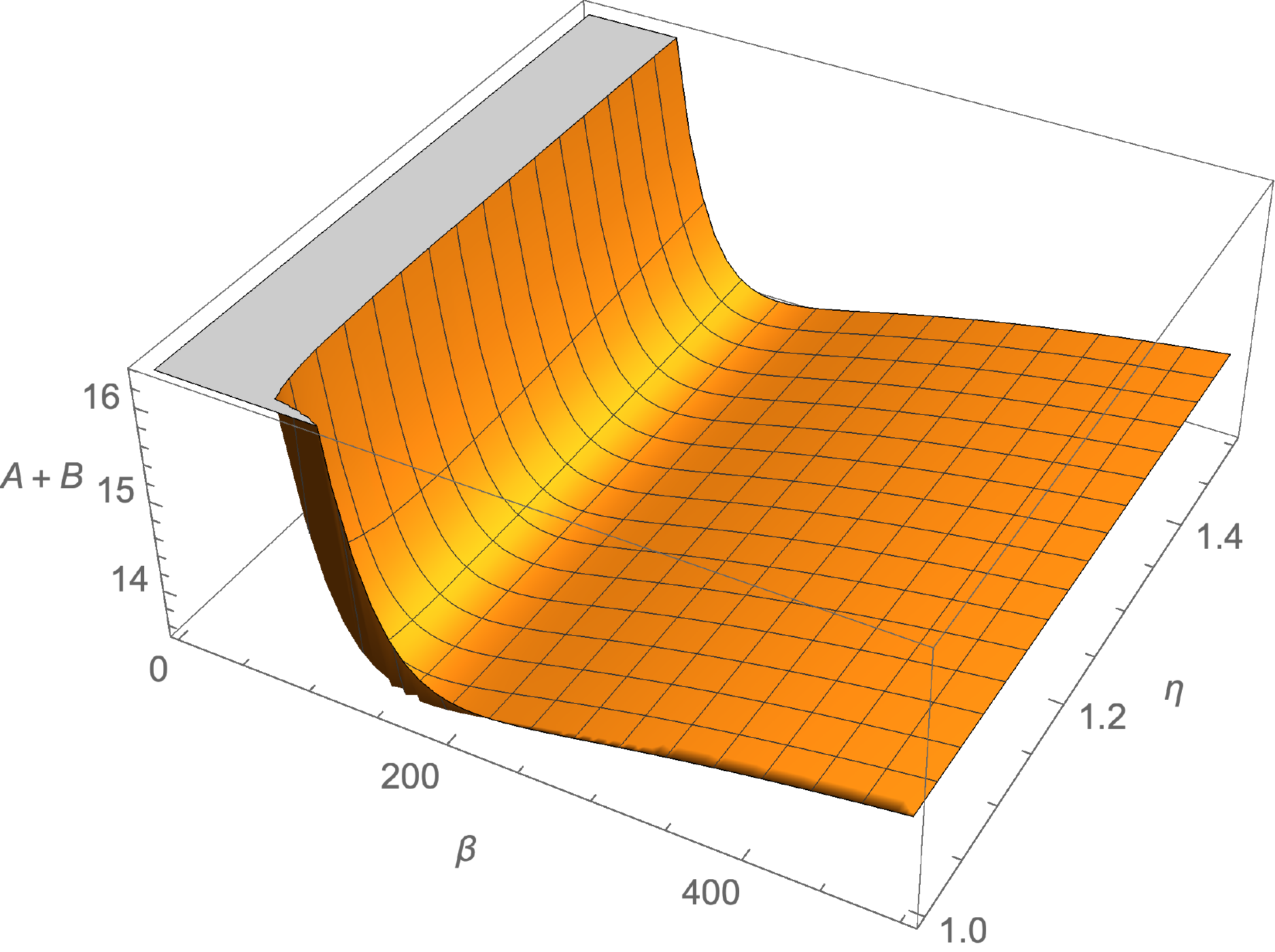}
\quad
\includegraphics[width=0.475\textwidth]{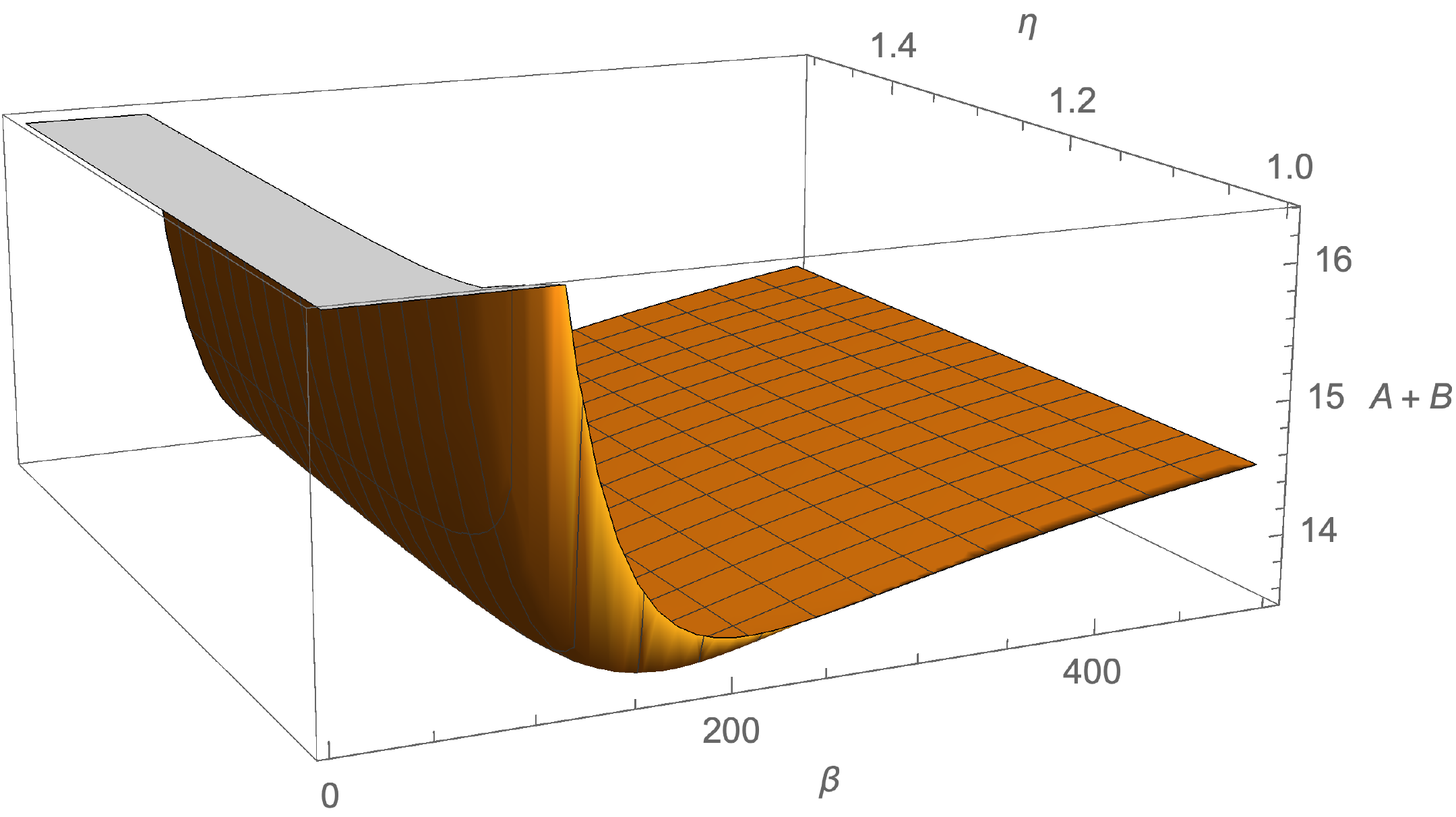}
\caption{Views of the surface $A(\beta,\tau,\eta;3) + B(\beta,\tau,\eta;3)$ (vertical axis)
for $\beta \in[4.5,500]$ and $\eta \in (1,\frac{3}{2}]$.}
\label{Figure:Hurt}
\end{figure}

\section{Proof of Corollary \ref{Corollary:Kappa}}\label{Section:ProofKappa}

The proof of Corollary \ref{Corollary:Kappa} relies upon the final stages of the proof of Theorem \ref{Theorem:Duke}.  
A fair amount of set up is needed before we begin and there are several cases that require special attention.

\subsection{Preliminaries}\label{Subsection:PreliminariesDukeThm}
Let $\K$ be a number field of degree $n_{\K} \geq 2$ over $\Q$ such that $\zeta_{\K}/\zeta$ is entire and let $\L/\Q$ denote the Galois closure of  the extension $\K/\Q$. 
Then 
\begin{equation}\label{eq:LZZ}\qquad
L(s,\chi) = \frac{\zeta_{\K}(s)}{\zeta(s)} = \sum_{k=1}^{\infty} \frac{\chi(k)}{k^s}
\quad \text{for $\Re s > 1$},
\end{equation}
in which $\chi$ is the character of a certain $(n_{\K}-1)$-dimensional representation of $\Gal(\L/\Q)$.
We claim that the conductor $N$ of $\chi$ satisfies $N = |\Delta_{\K}|$.
Indeed, the conductor of $\zeta_{\K}(s)$ is $|\Delta_{\K}|$ \cite[p.~125]{iwaniec2004analytic};
in particular, $\zeta(s) = \zeta_{\Q}(s)$ has conductor $1$.  Since $L(s,\chi) \zeta(s) = \zeta_{\K}(s)$,
the multiplicativity of the conductor \cite[p.~95]{iwaniec2004analytic} ensures that $N = |\Delta_{\K}|$.
Minkowski's bound implies that $N \geq 3$.

It is known that $L(s,\chi)$ is meromorphic on $\C$, and our assumption that $\zeta_{\K}/\zeta$ is entire ensures that $L(s,\chi)$ is entire.
Unconditionally, $L(s,\chi)$ has no zeros or poles on the vertical line $\Re s = 1$ \cite[Cor.~5.47]{iwaniec2004analytic}.
Since $\zeta(s)$ has a simple pole at $s=1$ with residue $1$, \eqref{eq:LZZ} implies that
\begin{equation*}
\kappa_{\K} = \underset{s=1}{\operatorname{Res}}\, \zeta_{\K}(s) = \lim_{s\to 1} (s-1)\zeta(s) L(s,\chi) = \lim_{s\to 1} L(s,\chi) = L(1,\chi).
\end{equation*}
In particular, this ensures that $L(1,\chi) > 0$ since $\zeta_{\K}(s)/\zeta(s) > 0$ for $s>1$ and because $\zeta_{\K}(s)$
has a simple pole at $s=1$.

Dirichlet convolution shows that the coefficients in \eqref{eq:LZZ} satisfy
\begin{equation*}
\chi(k) = \sum_{i|k} a_k \mu\bigg(\frac{k}{i}\bigg),
\end{equation*}
in which $a_k$ denotes the number of ideals in $\K$ of norm $k$ and $\mu$ is the M\"obius function.  Therefore,
\begin{equation}\label{eq:Chip}
\chi(p) = a_p\mu(1) + a_1 \mu(p) = a_p - 1 \in [-1,n_{\K}-1],
\end{equation}
and, in particular, $|\chi(k)|\leq n_{\K} - 1$.
Indeed, if $p$ is inert in $\K$, then $N(p) = p^{n_{\K}} \neq p$ and 
$\chi(p)=-1$.  The other extreme  $\chi(p) = n_{\K}-1$ occurs if $p$ splits totally in $\K$.

\subsection{Upper and lower bounds}\label{Subsection:GeneralSetup}
Since $N \geq 3$, Lemma \ref{Lemma:Mertens} provides
\begin{equation*}
\sum_{p \leq (\log N)^{\frac{1}{2}}} \frac{1}{p}
\,\,\leq\,\,
\log \log \log N + \log \tfrac{1}{2}+ M + m(\sqrt{ \log N } ) .
\end{equation*}
It follows from the previous inequality, \eqref{eq:Gbound}, and \eqref{eq:Chip} that
\begin{align*}
    \log L(1,\chi)
    &\leq  \sum_{p \leq (\log N)^{\frac{1}{2}}} \frac{\chi(p)}{p} + G(\beta,\delta,\eta;d,N) 
    \leq d \!\!\!\!   \sum_{p \leq (\log N)^{\frac{1}{2}}} \frac{1}{p} + G(\beta,\delta,\eta;d,N)  \\
    &= d  \log \log \log N + H(\beta,\delta,\eta;d,N),
\end{align*}
in which
\begin{equation*}
H(\beta,\delta,\eta;d,N) = d\big(\log \tfrac{1}{2} + M  + m( \sqrt{\log N}) \big) + G(\beta,\delta,\eta;d,N).
\end{equation*}
Similarly,
\begin{align*}
\log L(1,\chi)
&\geq\,\, -\!\!\!\! \sum_{p \leq (\log N)^{\frac{1}{2}}} \!\!\!\!  p^{-1}   - G(\beta,\delta,\eta;d,N) \\
&\geq- \log \log\log N - \log \tfrac{1}{2}  -  M - m(\sqrt{\log N} ) - G(\beta,\delta,\eta;d,N) \\
&= - \log \log \log N - K(\beta,\delta,\eta;d,N),
\end{align*}
in which
\begin{equation*}
K(\beta,\delta,\eta;d,N) = \log \tfrac{1}{2} + M  + m( \sqrt{\log N})  
+ G(\beta,\delta,\eta;d,N).
\end{equation*}
Since $L(1,\chi)>0$, we exponentiate the inequalities above,
perform the substitutions $d = n_{\K}-1$ and $N = | \Delta_{\K} |$,
and obtain
\begin{equation}\label{eq:Dedekind}
\frac{1}{e^{K(\beta,\delta,\eta;d,N)} \log \log | \Delta_{\K} | }
\,\leq\, \kappa_{\K} \,\leq\, (\log \log |\Delta_{\K}|)^{n_{\K}-1} e^{H(\beta,\delta,\eta;d,N)}.
\end{equation}
We now optimize the parameters $\beta$, $\delta$, and $\eta$ 
to suit our particular needs.

\subsection{General case}
We can optimize $H$ and $K$ in the same manner
that we treated $G$ in Subsection \ref{Subsection:DukeComplete}.
The functions $H(\beta,\delta,\eta;d,N)$ and $K(\beta,\delta,\eta;d,N)$ are affine in $d$
and decreasing in $N$ when the other variables are fixed in the search region
\eqref{eq:SearchRegion}.
The optimal choices of $(\beta,\delta,\eta)$ in both cases equal \eqref{eq:Choice} (to the precision displayed).
This is to be expected since the variables $\beta$, $\delta$, and $\eta$ only appear in the definitions of $H$ and $K$ as arguments of the summand 
$G(\beta,\delta,\eta;d,N)$.
In the end, we obtain
\begin{equation*}
\frac{1}{e^{17.81 (n_{\K}-1)} \log \log | \Delta_{\K}| }
\leq \kappa_{\K} \leq  (e^{17.81}\log \log | \Delta_{\K}|)^{n_{\K}-1} .
\end{equation*}

\subsection{Fields of small degree}\label{ssec:fieldsofsmalldegree}
For $2 \leq n_{\K} \leq 6$, we can make some small improvements
(still assuming that the function $\zeta_{\K}/ \zeta$ is entire).
Let $N_0:= N_0(n)$ denote the minimum absolute discriminant
of a number field of degree $n$.
For $n=2,3,4,5,6$, these are $3, 23, 117, 1609, 9747$,
respectively; see Table \ref{Table:NumberFields}.

\begin{table}
\begin{equation*}
\begin{array}{c|cclc}
n & N_0 & \Delta_{\K} & \text{Defining polynomial for $\K$} & \text{LMFBD Entry} \\
\hline
2 & 3 & -3 & x^2-x+1 & \href{https://www.lmfdb.org/NumberField/2.0.3.1}{2.0.3.1}\\[2pt]
3 & 23 & 23 & x^3 - x^2 + 1  & \href{https://www.lmfdb.org/NumberField/3.1.23.1}{3.1.23.1}\\[2pt]
4 & 117 & 117 & x^4 - x^3 - x^2 + x + 1  & \href{https://www.lmfdb.org/NumberField/4.0.117.1}{4.0.117.1} \\[2pt]
5 & 1609 & 1609 & x^5 - x^3 - x^2 + x + 1 & \href{https://www.lmfdb.org/NumberField/5.1.1609.1}{5.1.1609.1}\\[2pt]
6 & 9747 & -9747 & x^6 - x^5 + x^4 - 2x^3 + 4x^2 - 3x + 1 & \href{https://www.lmfdb.org/NumberField/6.0.9747.1}{6.0.9747.1}
\end{array}
\end{equation*}
\caption{Number fields $\K$ with minimal absolute discriminant $N_0 = | \Delta_{\K}|$ for a given degree $n=2,3,\ldots,6$,
along with defining polynomials and corresponding entry in the \href{https://www.lmfdb.org/}{The $L$-functions and Modular Forms Database}.
}
\label{Table:NumberFields}
\end{table}

Since $d = n_{\K}-1$ and because $H(\beta,\delta,\eta;d,N)$ and $K(\beta,\delta,\eta;d,N)$ are decreasing functions of $N$
for each fixed $(\beta,\delta,\eta)$ in \eqref{eq:SearchRegion}, we can select
$(\beta,\delta,\eta)$ to minimize $H$ and $K$, respectively.  It turns out that approximately the same
triple minimizes both $H$ and $K$, so we quote a single triple for each pair $(d,N_0)$ in
Table \ref{Table:HK}.  This leads to the following results quoted in Corollary \ref{Corollary:Kappa}.

This completes the proof of Corollary \ref{Corollary:Kappa}. \qed

\begin{table}
\begin{equation}
\begin{array}{c|ccccc}
n_{\K} & d & N_0 & (\beta,\delta,\eta) & H(\beta,\delta,\eta,d, N_0) & K(\beta,\delta,\eta,d, N_0)\\
\hline
2 & 1 & 3 & (155.648, 0.213503, 1.18818) &17.809 & 17.809\\[3pt]
3 & 2 & 23 & (13.0627, 0.210516, 1.16757) & 18.8667 & 17.5328\\[3pt]
4 & 3 & 117& (9.57219, 0.210398, 1.16682) & 24.0981 & 22.5199\\[3pt]
5 & 4 & 1609& (7.5451, 0.208941, 1.15761) & 28.1733 & 26.9298\\[3pt]
6 & 5 & 9747 & (6.80012, 0.208989, 1.15791) & 33.3541 &32.2334
\end{array}
\end{equation}
\caption{Minimum absolute discriminant $N_0$ 
for a number field $\K$ of degree $n_{\K}$, a good choice of 
$(\beta,\delta,\eta)$, and the corresponding values $H(\beta,\delta,\eta,d, N_0)$ and $K(\beta,\delta,\eta,d, N_0)$.}
\label{Table:HK}
\end{table}

\appendix
\section{Constants and Functions}\label{Section:Constants}
There are many constants and auxiliary functions, 
many of which are interdependent, that appear in the proof of Theorem \ref{Theorem:Duke}.
For the sake of convenience,
we summarize below those expressions that arise in a non-local context
(that is, not isolated to a single environment or subsection).

\begin{itemize}
\item $M = 0.261497212847642783755\dots$ (Meissel--Mertens constant) \hfill (p.~\pageref{p:M})
\item $m(x) = \dfrac{1}{\sqrt{x}} \left(\dfrac{3\log{x} + 4}{8\pi}\right)+ \dfrac{5}{x^2}$ \hfill (p.~\pageref{eq:mFunction})

\item $C_1 = \dfrac{\zeta( 3/2)}{\sqrt{2\pi}} = 1.0421869788690765546\ldots$ \hfill (p.~\pageref{p:C1})

\item $\eta \in (1, \frac{3}{2}]$ \hfill (p.~\pageref{p:eta})
\item $\delta \in (0, \frac{1}{2})$ (later restricted to $\delta \in (0,\tau]$ in Lemma \ref{Lemma:GammaLog}) \hfill (p.~\pageref{p:eta})

\item $C_2(\delta,\eta) = \dfrac{2\eta-1}{2\delta^2}$ \hfill (p.~\pageref{eq:C2-3})
\item $C_3(\eta) =  \left(C_1 \dfrac{\zeta(\eta)}{\zeta(2\eta)} \right)^2$ \hfill (p.~\pageref{eq:C2-3})

\item $\tau \approx 0.2197330687867739$ \hfill (p.~\pageref{eq:tau})

\item $C_4(\delta) =\dfrac{ 4 \sqrt{2} \, \delta ^{\delta -\frac{1}{2}} e^{\frac{1}{6 \delta }} }{\sqrt{\pi}(1-\delta )}$ \hfill (p.~\pageref{eq:C4})

\item $C_5(\delta, \eta) = C_4\big(\log  C_3 + \frac{\pi}{2}\big)$ \hfill (p.~\pageref{eq:C5})

\item $u \in [1, \frac{3}{2}]$ \hfill (p.~\pageref{p:u})

\item $C_6(\delta,\eta)= \dfrac{C_2(\delta,\eta) C_4(\delta)}{2\pi}$ \hfill (p.~\pageref{eq:C6-7})

\item $C_7(\delta,\eta) =  \dfrac{C_2(\delta,\eta) C_5(\delta,\eta)}{2 \pi}$ \hfill (p.~\pageref{eq:C6-7})

\item $\beta > \frac{1}{2}$ (later restricted to $\beta > 4.5 > 4.46003=\frac{5}{2 -4 \tau}$ on p.~\pageref{p:BetaBig}) \hfill (p.~\pageref{p:Beta})

\item $x = (\log N)^{\beta}$ \hfill (p.~\pageref{p:xy})

\item $y = (\log N)^{1/2}$ \hfill (p.~\pageref{p:xy})

\item $F(x,y) = \dfrac{(e-1)y}{x}+ m(x^2) + m( y)+\dfrac{4}{xe^x}$ \hfill (p.~\pageref{eq:F})

\item $A(\beta,\delta,\eta,N) =\dfrac{C_6(\delta,\eta) \log{N}}{x^{\frac{1}{2}-\delta} \log x}$ \hfill (p.~\pageref{eq:G})

\item $B(\beta,\delta,\eta,N) =2.19 + \log 4 \beta + F(x,y)+  \dfrac{C_7(\delta,\eta) }{x^{\frac{1}{2}-\delta} \log x}$ \hfill (p.~\pageref{eq:G})

\item $G(\beta,\delta,\eta;d,N) =A(\beta,\delta,\eta,N) + B(\beta,\delta,\eta,N) d$ \hfill (p.~\pageref{eq:G})

\end{itemize}

\bibliographystyle{amsplain}
\bibliography{EEALFDSSTDZR}

\end{document}